\renewcommand{\@seccntformat}[1]{{\csname the#1\endcsname}.\hspace{.5em}}
\newtheorem{thm}{Theorem}[section]
\newtheorem{conj}[thm]{Conjecture}
\newtheorem{lem}[thm]{Lemma}
\renewcommand{\thefootnote}{*}
\numberwithin{equation}{section}
\begin{document}

\begin{center}
{\large\bf $q$-Supercongruences modulo the fourth power of a cyclotomic\\[2pt] polynomial via creative microscoping}
\end{center}

\vskip 2mm \centerline{Victor J. W. Guo}
\begin{center}
{\footnotesize School of Mathematics and Statistics, Huaiyin Normal
University, Huai'an 223300, Jiangsu, People's Republic of China\\
{\tt jwguo@hytc.edu.cn }  }
\end{center}

%%date: 10 October, 2019
%\vskip 5mm
%\noindent {\it Suggested Running title}:

\vskip 0.7cm \noindent{\bf Abstract.} By applying Chinese remainder theorem for coprime polynomials
and the ``creative microscoping" method recently introduced by the author and Zudilin,
we establish parametric generalizations of three $q$-supercongruences modulo the fourth power of a cyclotomic polynomial.
The original $q$-supercongruences then follow from these parametric generalizations by taking the limits as
the parameter tends to $1$ (l'H\^opital's rule is utilized here). In particular, we prove a complete $q$-analogue of the (J.2) supercongruence of Van Hamme
and a complete $q$-analogue of a  ``divergent" Ramanujan-type supercongruence, thus confirming two recent conjectures of the author.
We also put forward some related conjectures, including a $q$-supercongruence modulo the fifth power of a cyclotomic polynomial.

\vskip 3mm \noindent {\it Keywords}:  $q$-congruence; supercongruence; cyclotomic polynomial; Chinese remainder theorem.
\vskip 0.2cm \noindent{\it AMS Subject Classifications}: 33D15, 11A07, 11B65

\renewcommand{\thefootnote}{**}

\section{Introduction}
In his second notebook, Ramanujan mysteriously recorded 17 infinite series representations of $1/\pi$ дл (see \cite[p. 352]{Berndt}),
including for instance
\begin{equation*}
\sum_{k=0}^\infty (6k+1)\frac{(\frac{1}{2})_k^3}{k!^3 4^k}
=\frac{4}{\pi},
\end{equation*}
which he later published in \cite{Ramanujan}. Here and throughout the paper, $(a)_n=a(a+1)\cdots(a+n-1)$ stands for the Pochhammer symbol.
It was noticed by Van Hamme \cite{Hamme} in 1997 that several Ramanujan's and Ramanujan-type formulas possess nice $p$-adic analogues, such as
\begin{align}
\sum_{k=0}^{(p-1)/2} (4k+1)\frac{(\frac{1}{2})_k^4}{k!^4}
&\equiv p\pmod{p^3},  \label{eq:c2} \\[5pt]
\sum_{k=0}^{(p-1)/2} (6k+1)\frac{(\frac{1}{2})_k^3}{k!^3 4^k}
&\equiv (-1)^{(p-1)/2}p\pmod{p^4},  \label{eq:j2}
\end{align}
where $p>3$ is a prime.
The supercongruence \eqref{eq:c2} was proved by Van Hamme \cite[(C.2)]{Hamme} himself. Later Long \cite{Long}
proved that both \eqref{eq:c2} and \eqref{eq:j2} are true modulo $p^4$.
It was not until 2016 that Van Hamme's last supercongruence was confirmed by Osburn and Zudilin \cite{OZ} using the WZ
method \cite{WZ}.
For more Ramanujan-type supercongruences, see Zudilin's famous paper \cite{Zud2009}.

During the past few years, many congruences and supercongruences have been generalized to the $q$-settings by different authors (see, for example,
\cite{Gorodetsky,Guo2018,Guo-J,Guo-m3,Guo-div,GL18,GPZ,GS,GS2,GS3,GW,GuoZu,NP,Straub,Tauraso2}).
In particular, using the $q$-WZ method \cite{WZ} the author and Wang \cite{GW} established a $q$-analogue of \eqref{eq:c2}: for odd $n$,
\begin{align}
\sum_{k=0}^{(n-1)/2}[4k+1]\frac{(q;q^2)_k^4}{(q^2;q^2)_k^4}
\equiv q^{(1-n)/2}[n]+\frac{(n^2-1)(1-q)^2}{24}q^{(1-n)/2}[n]^3 \pmod{[n]\Phi_n(q)^3}.  \label{eq:gw}
\end{align}
They \cite[Conjecture 5.1]{GW} also asserted that the above $q$-congruence is also true when the sum on the left-hand side is over $k$ from 0 to $n-1$.
Here and in what follows we adopt the standard $q$-hypergeometric notation:
$(a;q)_n=(1-a)(1-aq)\cdots (1-aq^{n-1})$ is the {\em $q$-shifted factorial};
$[n]=[n]_q=1+q+\cdots+q^{n-1}$ is the {\em $q$-integer};
and $\Phi_n(q)$ denotes the $n$-th {\em cyclotomic polynomial} in $q$:
\begin{align*}
\Phi_n(q)=\prod_{\substack{1\leqslant k\leqslant n\\ \gcd(n,k)=1}}(q-\zeta^k),
\end{align*}
where $\zeta$ is an $n$-th primitive root of unity.

Moreover, the author and Zudilin \cite{GuoZu} devised a method, called ``creative microscoping",
to prove many $q$-supercongruences modulo $\Phi_n(q)^3$
by adding one or more extra parameters and considering asymptotics at roots of unity.
Later the author and Schlosser \cite{GS} applied the creative microscoping method
to deduce many other $q$-supercongruences from transformation formulas for basic
However, no $q$-supercongruences modulo $\Phi_n(q)^4$ are proved by the creative microscoping method
up to now.

In this paper, we shall give a creative microscoping proof of \eqref{eq:gw}. More precisely,
we shall establish the following parametric generalization of \eqref{eq:gw}.

\begin{thm}\label{thm:main-1}
Let $n$ be a positive odd integer. Then, modulo $[n]\Phi_n(q)(1-aq^n)(a-q^n)$,
\begin{align}
&\sum_{k=0}^{(n-1)/d}[4k+1]\frac{(aq;q^2)_k(q/a;q^2)_k(q;q^2)_k^2}{(aq^2;q^2)_k(q^2/a;q^2)_k(q^2;q^2)_k^2}  \notag\\
&\quad\equiv q^{(1-n)/2}[n]+q^{(1-n)/2}[n]\frac{(1-aq^n)(a-q^n)}{(1-a)^2}\left(1-\frac{n(1-a)a^{(n-1)/2}}{1-a^n}\right),  \label{eq:main-1}
\end{align}
where $d=1,2$.
\end{thm}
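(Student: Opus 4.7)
I propose to follow the creative microscoping framework of Guo and Zudilin, combined with the Chinese remainder theorem (CRT) for coprime polynomials. The modulus in \eqref{eq:main-1} factors as a product of pairwise coprime polynomials,
\[
[n]\Phi_n(q)(1-aq^n)(a-q^n) \;=\; (1-aq^n)(a-q^n)\,\Phi_n(q)^2 \prod_{\substack{d\mid n \\ 1<d<n}} \Phi_d(q),
\]
so by CRT it suffices to verify the congruence separately modulo each of $(1-aq^n)$, $(a-q^n)$, $\Phi_d(q)$ for every proper divisor $d>1$ of $n$, and $\Phi_n(q)^2$.

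The two linear factors in $a$ are handled by direct specialization. At $a=q^{-n}$, the Pochhammer $(aq;q^2)_k = (q^{1-n};q^2)_k$ vanishes for every $k\ge(n+1)/2$, so both the $d=1$ and $d=2$ versions of the sum truncate to $\sum_{k=0}^{(n-1)/2}$. The resulting finite sum is a terminating very-well-poised ${}_6\phi_5$ in base $q^2$ with main parameter $q$, free parameters $q^{1+n}$ and $q$, and terminator $q^{1-n}=q^{-2N}$ where $N=(n-1)/2$. Jackson's terminating ${}_6\phi_5$ summation then supplies a closed form which, after simplifying the Pochhammer ratio and collecting powers of $q$, equals exactly $q^{(1-n)/2}[n]$. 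This matches the right-hand side of \eqref{eq:main-1} at $a=q^{-n}$, since the correction term there is killed by its factor $1-aq^n$. The specialization $a=q^n$ follows by the manifest $a\leftrightarrow 1/a$ symmetry of both sides.

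For each block $\Phi_d(q)$ with $d\mid n$ and $1<d<n$, the right-hand side of \eqref{eq:main-1} automatically vanishes at a primitive $d$-th root of unity because it carries the overall factor $[n]$. To check that the left-hand side also vanishes modulo $\Phi_d(q)$, I would use the standard index-pairing argument familiar from the creative microscoping literature: match the summand at index $k$ with one at a complementary index (arising from an involution on residue classes mod $d$), exploiting the sign flip supplied by the factor $[4k+1]$ and the reflection formulas for $q$-Pochhammer symbols at primitive roots of unity.

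The main technical hurdle, and what pushes the modulus up to the fourth power of the cyclotomic polynomial, is the congruence modulo $\Phi_n(q)^2$. Both sides are divisible by $\Phi_n(q)$ through their common factor $[n]$, so the task reduces to matching not only the values but also the first-order Taylor coefficients in $\Phi_n(q)$ at $q=\zeta$ a primitive $n$-th root of unity. I would extract these next-order terms by expanding the summand around $q=\zeta$ via l'H\^opital and comparing with the analogous expansion of the right-hand side; the rational factor $1-\frac{n(1-a)a^{(n-1)/2}}{1-a^n}$ in \eqref{eq:main-1} must precisely absorb the deviation of the sum from the simpler value $q^{(1-n)/2}[n]$ at this order. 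Once \eqref{eq:main-1} is established, sending $a\to 1$ and resolving the resulting $0/0$ limit on the right via l'H\^opital recovers \eqref{eq:gw}.
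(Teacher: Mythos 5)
Your decomposition of the modulus into coprime blocks is legitimate, and your treatment of the two linear factors $(1-aq^n)$ and $(a-q^n)$ by specializing $a=q^{-n}$ (truncation of the sum, Jackson's ${}_6\phi_5$) together with the $a\leftrightarrow 1/a$ symmetry is exactly the standard creative-microscoping step and is fine. The congruence modulo $\Phi_d(q)$ for proper divisors $d$ is also known territory (it is the ``modulo $[n]$'' part already established in the Guo--Zudilin paper). But your plan for the block $\Phi_n(q)^2$ is where the proof actually lives, and what you propose there is not a method: ``expand the summand around $q=\zeta$ via l'H\^opital and compare first-order Taylor coefficients'' gives no mechanism for evaluating the derivative of the truncated sum at a root of unity, and no identity is available that would make that expansion close up into the stated rational function of $a$. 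Single-parameter creative microscoping with $a$ alone only ever yields the congruence modulo $(1-aq^n)(a-q^n)\Phi_n(q)$, hence $\Phi_n(q)^3$ after $a\to1$; getting to $\Phi_n(q)^2$ in the presence of $a$ (hence $\Phi_n(q)^4$ in the limit) is precisely the obstruction your outline does not overcome.

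The paper's route around this is different in an essential way: it introduces a \emph{second} parameter $b$ into the sum, proves the congruence \eqref{eq:q-Long-gen} modulo $[n](1-aq^n)(a-q^n)$ and, separately, an exact evaluation at $b=q^n$ (again via Jackson's ${}_6\phi_5$) giving a congruence modulo $b-q^n$, and then glues the two by the Chinese remainder theorem \emph{in the parameter $b$} using the explicit idempotents \eqref{eq:ab-1}--\eqref{eq:ab-2}. Letting $b\to1$ degenerates the factor $b-q^n$ to $1-q^n$, which contributes the extra copy of $\Phi_n(q)$ beyond the one already inside $[n]$; Lemma 2.1 then reduces the $b\to1$ limit of the right-hand side to the stated form. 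Without some substitute for this second parameter and the CRT gluing, your $\Phi_n(q)^2$ step cannot be completed, so the proposal as written has a genuine gap at its central point.
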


By l'H\^opital's rule, we have
\begin{align*}
\lim_{a\to 1}\frac{(1-aq^n)(a-q^n)}{(1-a)^2}\frac{(1-a^n-n(1-a)a^{(n-1)/2})}{(1-a^n)}
=\frac{(n^2-1)(1-q)^2}{24}[n]^2.
\end{align*}
Thus, taking the limits of the two sides of \eqref{eq:main-1} as $a\to 1$, we see that \eqref{eq:gw}
is true modulo $\Phi_n(q)^4$. But the proof of \cite[Theorem 12.9]{GS}
(or \cite[Theorem 4.2]{GuoZu}) already indicates that it is also true modulo $[n]$.
Therefore, the $q$-congruence \eqref{eq:gw} and \cite[Conjecture 5.1]{GW} are consequences of \eqref{eq:main-1}.

Likewise, there is a $q$-analogue of \eqref{eq:j2} (i.e., the (J.2) supercongruence of Van Hamme \cite{Hamme}) proposed by
the author \cite[Conjecture 1.1]{Guo-J}: for odd $n$,
\begin{align}
&\sum_{k=0}^{(n-1)/2}q^{k^2}[6k+1]\frac{(q;q^2)_k^2 (q^2;q^4)_k }{(q^4;q^4)_k^3} \notag\\[5pt]
&\quad\equiv (-q)^{(1-n)/2}[n]+\frac{(n^2-1)(1-q)^2}{24}(-q)^{(1-n)/2}[n]^3 \pmod{[n]\Phi_n(q)^3}.  \label{eq:qj2}
\end{align}
The author \cite{Guo-J} proved that \eqref{eq:qj2} is true modulo $[n]\Phi_n(q)$,
and the author and Zudilin \cite{GuoZu} proved that \eqref{eq:qj2} is also true modulo $[n]\Phi_n(q)^2$
by the aforementioned method of creative microscoping.
Here we shall completely confirm \eqref{eq:qj2} by showing the following parametric generalization.
\begin{thm}\label{thm:main-2}
Let $n$ be a positive odd integer. Then, modulo $[n]\Phi_n(q)(1-aq^n)(a-q^n)$,
\begin{align}
&\sum_{k=0}^{(n-1)/2}q^{k^2}[6k+1]\frac{(aq;q^2)_k(q/a;q^2)_k (q^2;q^4)_k }{(aq^4;q^4)_k(q^4/a;q^4)_k(q^4;q^4)_k} \notag\\
&\quad\equiv (-q)^{(1-n)/2}[n]+(-q)^{(1-n)/2}[n]\frac{(1-aq^n)(a-q^n)}{(1-a)^2}\left(1-\frac{n(1-a)a^{(n-1)/2}}{1-a^n}\right).  \label{eq:main-2}
\end{align}
\end{thm}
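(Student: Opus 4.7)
\noindent\emph{Proof plan.} The plan is to apply the creative microscoping method of Guo and Zudilin, exactly as it was used for Theorem~\ref{thm:main-1}. The two polynomials $[n]\Phi_n(q)$ and $(1-aq^n)(a-q^n)$ are coprime in $\mathbb{Z}[q,a]$ (the first lies in $\mathbb{Z}[q]$ while the second is primitive of degree~$2$ in $a$), so by the Chinese remainder theorem for coprime polynomials it suffices to verify \eqref{eq:main-2} modulo each factor separately.

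Modulo $(1-aq^n)(a-q^n)$: since this polynomial is the product of two distinct linear factors in $a$, it is enough to specialize $a=q^{-n}$ and $a=q^n$. The LHS summand is visibly invariant under $a\leftrightarrow 1/a$, so the two specializations are equivalent and I focus on $a=q^{-n}$. At that value the right-hand side collapses to $(-q)^{(1-n)/2}[n]$ (the bracketed piece vanishes), while the left-hand side becomes a terminating series of quadratic $q$-hypergeometric type with bases $q^2$ and $q^4$ and the exponential factor $q^{k^2}$. I plan to evaluate this series in closed form using an appropriate quadratic summation---most likely a specialization of a terminating ${}_8\phi_7$ (Jackson) summation combined with a quadratic transformation, or a direct application of one of the Gasper--Rahman quadratic identities---and to check that the result equals $(-q)^{(1-n)/2}[n]$.

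Modulo $[n]\Phi_n(q)$: the right-hand side is a manifest multiple of $[n]$ and therefore vanishes modulo $[n]\Phi_n(q)$. For the left-hand side, divisibility by $[n]$ is inherited from the arguments of \cite[Theorem~12.9]{GS} or \cite[Theorem~4.2]{GuoZu}, exactly as for Theorem~\ref{thm:main-1}. The remaining factor of $\Phi_n(q)$ I intend to produce by pairing the summands at indices $k$ and $(n-1)/2-k$: modulo $\Phi_n(q)$ we have $q^n\equiv 1$, which should force the paired terms to cancel after explicit manipulation of the Pochhammer ratios. Combining the two congruences via Chinese remainders then gives \eqref{eq:main-2}, and the supercongruence \eqref{eq:qj2} follows by the $a\to 1$ limit with l'H\^opital's rule, exactly as in the deduction of \eqref{eq:gw} from Theorem~\ref{thm:main-1}.

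The main obstacle will be the closed-form evaluation at $a=q^{-n}$. The summand is not in classical very-well-poised form because of the factor $q^{k^2}$ and the mixing of bases $q^2$ and $q^4$; identifying the correct quadratic summation formula and matching its output precisely with $(-q)^{(1-n)/2}[n]$ will be the technical heart of the proof.
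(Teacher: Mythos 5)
Your decomposition of the modulus into the coprime pieces $(1-aq^n)(a-q^n)$ and $[n]\Phi_n(q)$ collapses at the second piece, and the failure is not a technicality. You assert that the right-hand side of \eqref{eq:main-2} ``is a manifest multiple of $[n]$ and therefore vanishes modulo $[n]\Phi_n(q)$,'' and accordingly you plan to prove that the left-hand side is $\equiv 0\pmod{[n]\Phi_n(q)}$ by pairing the summands at $k$ and $(n-1)/2-k$. But the right-hand side is \emph{not} divisible by $[n]\Phi_n(q)$: writing it as $(-q)^{(1-n)/2}[n]\,S$ with
\[
S=1+\frac{(1-aq^n)(a-q^n)}{(1-a)^2}\left(1-\frac{n(1-a)a^{(n-1)/2}}{1-a^n}\right),
\]
the relation $q^n\equiv 1\pmod{\Phi_n(q)}$ gives $\frac{(1-aq^n)(a-q^n)}{(1-a)^2}\equiv -1$, hence $S\equiv \frac{n(1-a)a^{(n-1)/2}}{1-a^n}\pmod{\Phi_n(q)}$, a nonzero rational function of $a$ alone. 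So, assuming the theorem, the left-hand side is likewise not $\equiv 0\pmod{[n]\Phi_n(q)}$, and no amount of term-pairing can make it so; you would be attempting to prove a false congruence. Your first step (specializing $a=q^{\pm n}$ and evaluating the resulting terminating quadratic series) is sound in spirit, but at best it recovers the congruence modulo $[n](1-aq^n)(a-q^n)$, which is already the $b=1$ case of Lemma \ref{thm:q-Hamme-J2L2}; the entire content of Theorem \ref{thm:main-2} is the extra factor of $\Phi_n(q)$ together with the explicit correction term, and your scheme produces neither.

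The paper manufactures that extra cyclotomic factor by running the Chinese remainder theorem in a \emph{different} variable: it keeps the auxiliary parameter $b$ alive, combines the known congruence \eqref{eq:q-Hamme-J2L2} modulo $[n](1-aq^n)(a-q^n)$ with a closed-form evaluation of the same sum modulo $b-q^{2n}$ obtained from Rahman's quadratic summation \eqref{quadratic}, and glues the two via the weights \eqref{eq:ab-1}--\eqref{eq:ab-2} into the single congruence \eqref{eq:abqn-new} modulo $[n](1-aq^n)(a-q^n)(b-q^n)$. Only then does it let $b\to 1$: the factor $b-q^n$ degenerates to $1-q^n$, which contains $\Phi_n(q)$, and the two CRT weights, simplified by \eqref{eq:an-frac-new} and the identity \eqref{eq:relation}, produce exactly the correction term on the right of \eqref{eq:main-2}. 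To repair your argument you must reinstate the parameter $b$ and perform the CRT in $b$ before taking $b\to 1$, rather than splitting the modulus as you proposed.
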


It is clear that the $q$-congruence \eqref{eq:qj2} modulo $\Phi_n(q)^4$ follows from \eqref{eq:main-2} by taking the limit as $a\to 1$.
Since \eqref{eq:qj2} modulo $[n]$ has already been given in \cite{Guo-J} and \cite{GuoZu}, the $q$-congruence \eqref{eq:qj2} is thus a direct
conclusion of \eqref{eq:main-2}.

Partially motivated by a supercongruence of Sun \cite[Conjecture 5.1(ii)]{Sun}, the author \cite[Conjecture 7.1]{Guo-div} made the following conjecture:
for odd $n$,
\begin{align}
&\sum_{k=0}^{n-1}[3k+1]\frac{(q;q^2)_k^3 q^{-{k+1\choose 2} } }{(q;q)_k^2 (q^2;q^2)_k} \notag\\[5pt]
&\qquad
\equiv q^{(1-n)/2}[n]+\frac{(n^2-1)(1-q)^2}{24}q^{(1-n)/2}[n]^3 \pmod{[n]\Phi_n(q)^3}.  \label{eq:q-div}
\end{align}
The $q$-congruence \eqref{eq:q-div} modulo $[n]\Phi_n(q)^2$ was proved by the author \cite{Guo-div} himself,
and was reproved by the author and Zudilin \cite{GuoZu} by establishing its parametric generalization.
Here we shall prove that \eqref{eq:q-div} is true by the method of creative microscoping again.
Namely, we shall establish the following $q$-congruence.
\begin{thm}\label{thm:main-3}
Let $n$ be a positive odd integer. Then, modulo $[n]\Phi_n(q)(1-aq^n)(a-q^n)$,
\begin{align}
&\sum_{k=0}^{n-1}[3k+1]\frac{(aq;q^2)_k(q/a;q^2)_k(q;q^2)_k  q^{-{k+1\choose 2} } }{(aq;q)_k(q/a;q)_k (q^2;q^2)_k} \notag\\[5pt]
&\quad\equiv q^{(1-n)/2}[n]+q^{(1-n)/2}[n]\frac{(1-aq^n)(a-q^n)}{(1-a)^2}\left(1-\frac{n(1-a)a^{(n-1)/2}}{1-a^n}\right).  \label{eq:main-3}
\end{align}
\end{thm}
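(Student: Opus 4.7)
The plan follows the \emph{creative microscoping} strategy: establish the parametric congruence \eqref{eq:main-3} modulo each of the three pairwise coprime polynomial factors
\[
1-aq^n,\qquad a-q^n,\qquad [n]\Phi_n(q),
\]
and then combine them via the Chinese remainder theorem for polynomials, which is the structural innovation advertised in the abstract.

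First I would treat the congruences modulo $1-aq^n$ and modulo $a-q^n$ together. Setting $a=q^{-n}$, the factor $(aq;q^2)_k=(q^{1-n};q^2)_k$ vanishes for $k\ge(n+1)/2$, so the summation range $0\le k\le n-1$ collapses to $0\le k\le (n-1)/2$ and the sum becomes a terminating, well-poised basic hypergeometric series. I expect that a classical $q$-summation formula --- essentially the same one used in \cite{Guo-div,GuoZu} to handle the weaker modulus $[n]\Phi_n(q)^2$ --- evaluates this truncated sum to $q^{(1-n)/2}[n]$. Since the correction term on the right-hand side carries the explicit factor $(1-aq^n)(a-q^n)/(1-a)^2$, this exactly matches the RHS modulo $1-aq^n$. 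The congruence modulo $a-q^n$ then follows either by the $a\leftrightarrow a^{-1}$ symmetry of both sides of \eqref{eq:main-3} (both sides are invariant, up to a rebalancing power of $a$) or by repeating the computation at $a=q^n$.

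Next I would handle the modulus $[n]\Phi_n(q)=\Phi_n(q)^2\prod_{d\mid n,\;1<d<n}\Phi_d(q)$ factor by factor. For each intermediate $\Phi_d(q)$ with $1<d<n$, both sides vanish: the RHS because $\Phi_d(q)\mid[n]$, and the LHS because at a primitive $d$-th root of unity (note $d$ is odd, since $n$ is) the factor $(q;q^2)_k$ truncates the sum appropriately and an involution on the summation indices pairs the surviving terms into cancelling pairs. The delicate step is modulo $\Phi_n(q)^2$: here the right-hand side retains its full two-term form (only one $\Phi_n(q)$ comes from $[n]$, and $(1-aq^n)(a-q^n)/(1-a)^2\equiv -1\pmod{\Phi_n(q)}$ is a unit), so one must match the LHS up to \emph{second} order at a primitive $n$-th root of unity. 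The parameter $a$ is used essentially here: knowing LHS $-$ RHS modulo $\Phi_n(q)$ (from the $[n]\Phi_n(q)^2$-level result of \cite{Guo-div,GuoZu}) together with its vanishing at $a=q^{\pm n}$ (from the previous paragraph) pins down enough of the next-order behaviour in $q^n-1$ to force the mod $\Phi_n(q)^2$ identity.

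Finally, the three local congruences combine by the Chinese remainder theorem to yield \eqref{eq:main-3} modulo the full product $[n]\Phi_n(q)(1-aq^n)(a-q^n)$. The main obstacle is the mod $\Phi_n(q)^2$ step: previous creative microscoping proofs only produced parametric congruences to modulus $\Phi_n(q)^1$, and this second-order refinement at a primitive root of unity --- precisely the ingredient that, after the l'H\^opital limit $a\to 1$, upgrades the earlier classical modulus $\Phi_n(q)^3$ of \cite{Guo-div,GuoZu} to $\Phi_n(q)^4$ --- is the key technical innovation of the theorem.
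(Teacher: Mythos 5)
Your reductions modulo $1-aq^n$, $a-q^n$ and modulo the factors $\Phi_d(q)$ with $d\mid n$, $1<d<n$ are sound and essentially reproduce what the paper imports as Lemma 4.1 (the congruence modulo $[n](1-aq^n)(a-q^n)$ from \cite{GS,GuoZu}). The gap is in the step you yourself flag as decisive: the congruence modulo $\Phi_n(q)^2$. Your argument there is that knowing $\mathrm{LHS}-\mathrm{RHS}\equiv 0\pmod{\Phi_n(q)}$ together with its vanishing at $a=q^{\pm n}$ ``pins down enough of the next-order behaviour'' to force the mod $\Phi_n(q)^2$ statement. This is not a mechanism but a restatement of the goal: since $\Phi_n(q)$, $1-aq^n$ and $a-q^n$ are pairwise coprime, those three facts combine (by CRT) only to divisibility by $\Phi_n(q)(1-aq^n)(a-q^n)$, and no Chinese-remainder argument among coprime moduli can ever produce the \emph{square} of one of them. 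Relatedly, your plan never produces the explicit correction term $1-n(1-a)a^{(n-1)/2}/(1-a^n)$ on the right-hand side of \eqref{eq:main-3}; it has to come from somewhere concrete.

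The paper's actual device is a second parameter $b$. One starts from the $b$-deformed sum of \eqref{eq:conj-3k+1}, which is known modulo $[n](1-aq^n)(a-q^n)$, and proves an \emph{exact evaluation} of that sum at $b=q^n$ (via the transformation \cite[Eq.\ (3.8.13)]{GR} and the Pfaff--Saalsch\"utz theorem), i.e.\ a congruence modulo $b-q^n$. CRT in the $b$-direction, using \eqref{eq:ab-1}--\eqref{eq:ab-2}, then gives the two-parameter congruence \eqref{eq:3k+1-new} modulo $[n](1-aq^n)(a-q^n)(b-q^n)$. Letting $b\to 1$ degenerates $b-q^n$ to $1-q^n$, which contains the extra factor $\Phi_n(q)$; Lemma 2.1 (the evaluation \eqref{eq:an-frac} of $(q;q^2)_{(n-1)/2}^2/\bigl((aq^2;q^2)_{(n-1)/2}(q^2/a;q^2)_{(n-1)/2}\bigr)$ modulo $\Phi_n(q)$) together with the identity \eqref{eq:relation} then converts the two CRT terms into exactly the right-hand side of \eqref{eq:main-3}. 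This auxiliary parameter and the limit $b\to1$ are the missing ingredient: without them your outline cannot get past modulus $\Phi_n(q)^1$ at the primitive $n$-th roots of unity.
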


As before, we can deduce that \eqref{eq:q-div} is true modulo $\Phi_n(q)^4$ (and is therefore also true modulo $[n]\Phi_n(q)^3$)
from  \eqref{eq:main-3} by taking $a\to 1$.

The rest of the paper is organized as follows. We shall prove Theorems \ref{thm:main-1}--\ref{thm:main-3} in Sections 2--4, respectively.
More concretely, we shall establish first the corresponding $q$-congruences modulo $[n](1-aq^n)(a-q^n)(b-q^n)$,
where Chinese remainder theorem is utilized. Then Theorems \ref{thm:main-1}--\ref{thm:main-3} immediately follow from these $q$-congruences with
parameters $a$ and $b$ by letting $b\to 1$. Finally, we propose some related conjectures for further study in Section 5.

%%%%%%%%%%%%%%%%%%%%%%%%%%%%%%%%%%%%%%%%%%%%%%%%%%%%%%%%%%%%%%%%%%%%%%%%%%%%%%%%%%%%%%%%%%%%%%%%%%%%%%%%%%%%%%%%%%%%%%%%%%%%%%%%%%%%%%%%%%%%%%%%%%%%%%%%%%%%%%%%%%%%%%%%%%%%%%%%%%%%%%%%%%%%%%%%%%%%%%%%%%%%%%%%%%%%%%%%
\section{Proof of Theorem \ref{thm:main-1}}

We first require the following easily proved lemma.
\begin{lem}
Let $n$ be a positive odd integer. Then
\begin{align}
(aq^2,q^2)_{(n-1)/2}(q^2/a,q^2)_{(n-1)/2}
&\equiv (-1)^{(n-1)/2}\frac{(1-a^n)q^{-(n-1)^2/4}}{(1-a)a^{(n-1)/2}} \pmod{\Phi_n(q)}, \label{eq:lem-1} \\[5pt]
(aq,q^2)_{(n-1)/2}(q/a,q^2)_{(n-1)/2}
&\equiv  (-1)^{(n-1)/2}\frac{(1-a^n)q^{(1-n^2)/4}}{(1-a)a^{(n-1)/2}} \pmod{\Phi_n(q)}.  \label{eq:lem-2}
\end{align}
\end{lem}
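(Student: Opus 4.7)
The plan is to deduce both congruences from a single mechanism: modulo $\Phi_n(q)$ we have $q^n\equiv 1$, and the cyclotomic identity
\[
\prod_{j=0}^{n-1}(1-aq^j)\equiv 1-a^n \pmod{\Phi_n(q)}
\]
yields $\prod_{j=1}^{n-1}(1-aq^j)\equiv (1-a^n)/(1-a)$. So the whole game is to rewrite each product $(aq^r;q^2)_{(n-1)/2}(q^r/a;q^2)_{(n-1)/2}$ (for $r=1,2$) as this truncated product times an explicit monomial in $a$ and $q$.

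For \eqref{eq:lem-1} I would write $(aq^2;q^2)_{(n-1)/2}=\prod_{k=1}^{(n-1)/2}(1-aq^{2k})$ and, pulling $-q^{2k}/a$ out of each factor of the second Pochhammer symbol,
\[
(q^2/a;q^2)_{(n-1)/2}=(-1)^{(n-1)/2}a^{-(n-1)/2}q^{(n^2-1)/4}\prod_{k=1}^{(n-1)/2}(1-aq^{-2k}),
\]
using $2(1+2+\cdots+(n-1)/2)=(n^2-1)/4$. Modulo $\Phi_n(q)$, $q^{-2k}\equiv q^{n-2k}$, and since $n$ is odd the exponents $\{2,4,\ldots,n-1\}$ and $\{n-2,n-4,\ldots,1\}$ partition $\{1,2,\ldots,n-1\}$; so the two products merge into $\prod_{j=1}^{n-1}(1-aq^j)\equiv (1-a^n)/(1-a)$. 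Finally, $(n^2-1)/4-(-(n-1)^2/4)=n(n-1)/2$ is a multiple of $n$, so $q^{(n^2-1)/4}\equiv q^{-(n-1)^2/4}$, giving \eqref{eq:lem-1}.

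The proof of \eqref{eq:lem-2} runs in parallel on the odd exponents: $(aq;q^2)_{(n-1)/2}=\prod_{k=0}^{(n-3)/2}(1-aq^{2k+1})$, while pulling $-q^{2k+1}/a$ out of each factor gives
\[
(q/a;q^2)_{(n-1)/2}=(-1)^{(n-1)/2}a^{-(n-1)/2}q^{(n-1)^2/4}\prod_{k=0}^{(n-3)/2}(1-aq^{-(2k+1)}),
\]
using $1+3+\cdots+(n-2)=(n-1)^2/4$. The residues $n-(2k+1)$ traverse the even values in $\{2,4,\ldots,n-1\}$, complementing the original odd exponents, so again the two products merge into $\prod_{j=1}^{n-1}(1-aq^j)\equiv (1-a^n)/(1-a)$, and $q^{(n-1)^2/4}\equiv q^{(1-n^2)/4}$ modulo $\Phi_n(q)$ by the same $q^{n(n-1)/2}\equiv 1$ argument.

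The content of the lemma is thus purely bookkeeping; the only genuine subtlety is keeping track of the exponent of $q$ correctly, in particular recognizing that the two seemingly different exponents $(n^2-1)/4$ and $-(n-1)^2/4$ (resp.\ $(n-1)^2/4$ and $(1-n^2)/4$) agree modulo $n$, which is what allows the formulas to be stated in the form chosen by the author.
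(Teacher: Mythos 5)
Your proof is correct and follows essentially the same route as the paper: both arguments use $q^{-j}\equiv q^{n-j}\pmod{\Phi_n(q)}$ to merge the two $q^2$-Pochhammer symbols into $(aq;q)_{n-1}$ times an explicit monomial, and then evaluate $(aq;q)_{n-1}\equiv 1+a+\cdots+a^{n-1}$. The only cosmetic difference is in that last evaluation, which the paper justifies via the $q$-binomial theorem and ${n-1\brack k}\equiv(-1)^kq^{-\binom{k+1}{2}}\pmod{\Phi_n(q)}$, while you invoke the equivalent cyclotomic factorization $\prod_{j=0}^{n-1}(1-aq^j)\equiv 1-a^n$.
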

\begin{proof}It is easy to see that
\begin{align*}
(q^2/a,q^2)_{(n-1)/2}
&=(1-q^2/a)(1-q^4/a)\cdots(1-q^{n-1}/a) \\
&\equiv (1-q^{2-n}/a)(1-q^{4-n}/a)\cdots (1-q^{-1}/a)\\
&=(-1)^{(n-1)/2}(aq,q^2)_{(n-1)/2}\frac{q^{-(n-1)^2/4}}{a^{(n-1)/2}} \pmod{\Phi_n(q)}.
\end{align*}
Therefore, the left-hand side of \eqref{eq:lem-1} is congruent to
\begin{align*}
(-1)^{(n-1)/2}(aq;q)_{n-1}\frac{q^{-(n-1)^2/4}}{a^{(n-1)/2}}.
\end{align*}
The proof of \eqref{eq:lem-1} then follows from the fact
\begin{align}
(aq;q)_{n-1}=\sum_{k=0}^{n-1}(-1)^k q^{k+1\choose 2}{n-1\brack k}a^k
\equiv \sum_{k=0}^{n-1}a^k \pmod{\Phi_n(q)}, \label{eq:q-bino-a}
\end{align}
where we have used the $q$-binomial theorem in the above equality.

Similarly, we can prove \eqref{eq:lem-2}.
\end{proof}

Recall that the author and Zudilin \cite[Theorem 4.2]{GuoZu} proved the following $q$-congruence:
\begin{lem}
\label{th:4.2}
Let $n$ be a positive odd integer. Then, modulo $[n](1-aq^n)(a-q^n)$,
\begin{equation}
\sum_{k=0}^{(n-1)/d}[4k+1]\frac{(aq;q^2)_k (q/a;q^2)_k (q/b;q^2)_k (q;q^2)_k}
{(aq^2;q^2)_k(q^2/a;q^2)_k (bq^2;q^2)_k (q^2;q^2)_k}b^k
\equiv\frac{(b/q)^{(n-1)/2} (q^2/b;q^2)_{(n-1)/2}}{(bq^2;q^2)_{(n-1)/2}}[n],
\label{eq:q-Long-gen}
\end{equation}
where $d=1,2$.
\end{lem}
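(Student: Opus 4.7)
The plan is to prove the congruence modulo each of the three pairwise coprime factors $[n]$, $1 - aq^n$, and $a - q^n$ in $\mathbb{Z}[a][q, q^{-1}]$ separately, then combine via the Chinese Remainder Theorem. As a preliminary reduction that handles both $d = 1, 2$ at once, observe that for $(n+1)/2 \leq k \leq n-1$ the numerator of the summand acquires the three factors $1 - q^n$, $1 - aq^n$, and $1 - q^n/a$ (from the index $j = (n-1)/2$ in $(q;q^2)_k$, $(aq;q^2)_k$, and $(q/a;q^2)_k$ respectively). Since $1 - q^n = (1-q)[n]$ and $1 - q^n/a = -a^{-1}(a - q^n)$, every such term is divisible by $[n](1-aq^n)(a-q^n)$, so the sums for $d = 1$ and $d = 2$ agree modulo the target, and only $k \leq (n-1)/2$ need be analyzed.

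For the factor $(1 - aq^n)(a - q^n)$, the summand's symmetry under $a \leftrightarrow a^{-1}$ (and the analogous symmetry of the right-hand side) reduces both residues to the single specialization $a = q^{-n}$. With this substitution the sum becomes a terminating very-well-poised ${}_6\phi_5$ in base $q^2$ with principal parameter $q$, terminating parameter $q^{1-n}$, and free upper parameters $q^{1+n}$ and $q/b$. Rogers' ${}_6\phi_5$ summation evaluates it to
$$\frac{(q^3;q^2)_{(n-1)/2}\,(bq^{1-n};q^2)_{(n-1)/2}}{(bq^2;q^2)_{(n-1)/2}\,(q^{2-n};q^2)_{(n-1)/2}}.$$
The identity $(q^3;q^2)_{(n-1)/2}/(q;q^2)_{(n-1)/2} = (1 - q^n)/(1 - q) = [n]$ extracts the factor $[n]$; then the Pochhammer reflections $(q^{2-n};q^2)_{(n-1)/2} = (-1)^{(n-1)/2} q^{-(n-1)^2/4}(q;q^2)_{(n-1)/2}$ and $(bq^{1-n};q^2)_{(n-1)/2} = (-1)^{(n-1)/2} b^{(n-1)/2} q^{-(n^2-1)/4}(q^2/b;q^2)_{(n-1)/2}$ rearrange the remaining ratio into $(b/q)^{(n-1)/2}(q^2/b;q^2)_{(n-1)/2}/(bq^2;q^2)_{(n-1)/2}$, matching the prescribed right-hand side at $a = q^{-n}$.

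For the factor $[n]$, since the right-hand side is explicitly divisible by $[n]$, the task is to show the reduced sum is divisible by $\Phi_d(q)$ for every $d > 1$ with $d \mid n$. Fix $q = \zeta$ a primitive $d$-th root of unity ($d$ odd, since $n$ is odd). Terms with $(d+1)/2 \leq k \leq d-1$ vanish because $(q;q^2)_k$ contains the factor $1 - \zeta^d$; analogous cancellations at higher blocks, combined with the $d$-periodic structure of the $q$-Pochhammer ratio at $\zeta$ (after resolving zero/pole cancellations between the numerator $(q;q^2)_k$ and the denominator $(q^2;q^2)_k$) and the sign flips produced by the very-well-poised factor $[4k+1]$, ensure that the total sum vanishes at $q = \zeta$.

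The main obstacle is the analysis modulo $(1 - aq^n)(a - q^n)$: explicitly matching Rogers' closed form to the prescribed right-hand side. The two Pochhammer reflections produce the exponent offsets $q^{-(n-1)^2/4}$ and $q^{-(n^2-1)/4}$ whose ratio must combine precisely with the $b^{(n-1)/2}$ prefactor to yield $(b/q)^{(n-1)/2}$, and one must verify that the factor $[n]$ emerges exactly once (from $(q^3;q^2)_{(n-1)/2}$, and \emph{not} from $(bq^{1-n};q^2)_{(n-1)/2}$ for generic $b$). This bookkeeping is the technical heart of the argument.
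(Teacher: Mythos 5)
First, a point of reference: the paper does not prove this lemma at all --- it quotes it from Guo and Zudilin \cite[Theorem 4.2]{GuoZu} --- so your attempt has to be measured against that original proof. Your analysis modulo $(1-aq^n)(a-q^n)$ agrees with it and is essentially complete: by the $a\leftrightarrow a^{-1}$ symmetry it suffices to take $a=q^{-n}$, the sum then terminates at $k=(n-1)/2$ (which also disposes of the difference between $d=1$ and $d=2$ for \emph{this} modulus), the terminating ${}_6\phi_5$ summation \cite[Appendix (II.21)]{GR} applies with the parameters you list, and your reflection identities together with $(q^3;q^2)_{(n-1)/2}/(q;q^2)_{(n-1)/2}=[n]$ do recombine into the stated right-hand side. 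The first genuine gap is your opening reduction of $d=1$ to $d=2$: it is \emph{not} true that each term with $(n+1)/2\leqslant k\leqslant n-1$ is divisible by $[n](1-aq^n)(a-q^n)$. Such a term is divisible by $\Phi_n(q)(1-aq^n)(a-q^n)$, but for a proper divisor $d$ of $n$ the zeros of $(q;q^2)_k$ at primitive $d$-th roots of unity can be completely cancelled by those of $(q^2;q^2)_k$ in the denominator. Concretely, for $n=9$ and $k=6$, each of $(q;q^2)_6$ and $(q^2;q^2)_6$ is divisible by $\Phi_3(q)^2$ and by no higher power of $\Phi_3(q)$, so that term has $\Phi_3(q)$-order $0$ and is not divisible by $[9]$; term-by-term divisibility only works when $n$ is prime.

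The second, more serious, gap is the modulus $[n]$ itself, where you offer a description rather than a proof. Writing $c_q(k)$ for the summand and $\zeta$ for a primitive $d$-th root of unity ($d\mid n$, $d>1$), everything hinges on the single claim
\begin{equation*}
\sum_{k=0}^{(d-1)/2}c_\zeta(k)=0\qquad\text{identically in $a$ and $b$,}
\end{equation*}
because one can check that $c_\zeta(ld+r)=(-1)^l L_l\,c_\zeta(r)$, where $L_l=\lim_{q\to\zeta}(q;q^2)_{ld}/(q^2;q^2)_{ld}$ is finite (the $a$-factors contribute $\bigl[(1-a^d)(1-a^{-d})\bigr]^l$ to numerator and denominator alike, and the $b$-factors contribute exactly $(-1)^l$), so every complete block of $d$ consecutive terms, and the final half block, is a multiple of this half-block sum; this block structure is also what actually repairs your first gap. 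You never prove the displayed vanishing, and it does not follow from the ${}_6\phi_5$ evaluation: substituting $a=q^{-d}$ there and letting $q\to\zeta$ only yields the vanishing at $a=1$, whereas the congruence modulo $[n]$ needs it for generic $a$. The real mechanism --- used in \cite{GuoZu}, and in transformed form in Section 5 of this paper via \cite[Lemma 3.1]{GS} --- is a pairing inside the half block: modulo $\Phi_d(q)$ one shows $c_\zeta\bigl((d-1)/2-k\bigr)=-c_\zeta(k)$ by combining reflection congruences for the four Pochhammer quotients with $[2d-4k-1]_\zeta=-\zeta^{-4k-1}[4k+1]_\zeta$; the self-paired middle term (present when $d\equiv1\pmod{4}$) vanishes on its own because there $[4k+1]=[d]\equiv0$. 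Until this pairing (or an equivalent argument) is carried out, the congruence modulo $[n]$ --- which is the true technical heart of the lemma, rather than the right-hand-side bookkeeping your last paragraph singles out --- remains unproved.
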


We find that the left-hand side of \eqref{eq:q-Long-gen} also has a simple expression modulo $b-q^n$.
\begin{lem}\label{lem:2}
Let $n$ be a positive odd integer. Then, modulo $b-q^n$,
\begin{equation}
\sum_{k=0}^{(n-1)/d}[4k+1]\frac{(aq;q^2)_k (q/a;q^2)_k (q/b;q^2)_k (q;q^2)_k}
{(aq^2;q^2)_k(q^2/a;q^2)_k (bq^2;q^2)_k (q^2;q^2)_k}b^k
\equiv\frac{(q;q^2)_{(n-1)/2}^2 [n]}{(aq^2;q^2)_{(n-1)/2}(q^2/a;q^2)_{(n-1)/2}},
\label{eq:q-Long-gen-2}
\end{equation}
where $d=1,2$.
\end{lem}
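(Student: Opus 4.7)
The plan is to reduce the claim to a classical $q$-hypergeometric summation. Since $b-q^n$ has degree one in $b$, it suffices to show that the two sides of \eqref{eq:q-Long-gen-2} agree after the substitution $b=q^n$.

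After this substitution the numerator factor $(q/b;q^2)_k$ becomes $(q^{1-n};q^2)_k$, which contains the zero $1-q^{1-n+(n-1)}=0$ whenever $k\geq (n+1)/2$. Consequently, regardless of whether the upper summation limit is $(n-1)/2$ (case $d=2$) or $n-1$ (case $d=1$), only the indices $0\leq k\leq(n-1)/2$ contribute, and the left-hand side collapses to
\begin{align*}
\sum_{k=0}^{(n-1)/2}[4k+1]\frac{(aq;q^2)_k(q/a;q^2)_k(q^{1-n};q^2)_k(q;q^2)_k}{(aq^2;q^2)_k(q^2/a;q^2)_k(q^{n+2};q^2)_k(q^2;q^2)_k}q^{nk}.
\end{align*}

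I would then recognise this series as a very-well-poised ${}_6\phi_5$ in base $q^2$, with parameters $A=q$, $B=aq$, $C=q/a$ and $q^{-2N}=q^{1-n}$ (hence $N=(n-1)/2$); note that the canonical argument $Aq^{2N+2}/(BC)$ collapses to $q^n$, exactly accounting for the factor $b^k=q^{nk}$. Jackson's terminating ${}_6\phi_5$ summation then evaluates the sum in closed product form as
\begin{align*}
\frac{(q^3;q^2)_{(n-1)/2}\,(q;q^2)_{(n-1)/2}}{(q^2/a;q^2)_{(n-1)/2}\,(aq^2;q^2)_{(n-1)/2}}.
\end{align*}
A short telescoping using $(q^3;q^2)_{(n-1)/2}(1-q)=(q;q^2)_{(n+1)/2}=(1-q^n)(q;q^2)_{(n-1)/2}$ rewrites $(q^3;q^2)_{(n-1)/2}$ as $[n]\,(q;q^2)_{(n-1)/2}$, whereupon the displayed expression becomes precisely the right-hand side of \eqref{eq:q-Long-gen-2}.

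The main ``obstacle'' is not deep: it is the bookkeeping needed to recognise the very-well-poised structure through the base-$q^2$ indexing, and to line up all six numerator and five denominator parameters against Jackson's summation. Once that identification is made, the evaluation is immediate and no further $q$-series manipulation is required.
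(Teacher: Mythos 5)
Your proposal is correct and follows essentially the same route as the paper: evaluate at $b=q^n$ and apply Jackson's terminating very-well-poised ${}_6\phi_5$ summation in base $q^2$ with $a=q$, $b=aq$, $c=q/a$, $N=(n-1)/2$, which is exactly the paper's argument. You additionally spell out two details the paper leaves implicit---the vanishing of $(q^{1-n};q^2)_k$ for $k\geqslant(n+1)/2$ (handling $d=1$) and the rewriting $(q^3;q^2)_{(n-1)/2}=[n]\,(q;q^2)_{(n-1)/2}$---both of which are correct.
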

\begin{proof}
Note that Jackson's ${}_6\phi_5$ summation formula can be written as
\begin{equation*}
\sum_{k=0}^N\frac{(1-aq^{2k})(a;q)_k(b;q)_k(c;q)_k(q^{-N};q)_k}
{(1-a)(q;q)_k(aq/b;q)_k(aq/c;q)_k(aq^{N+1};q)_k}\biggl(\frac{aq^{N+1}}{bc}\biggr)^k
=\frac{(aq;q)_N (aq/bc;q)_N}{(aq/b;q)_N(aq/c;q)_N}
\end{equation*}
(see \cite[Appendix (II.21)]{GR}). Letting $q\mapsto q^2$ and taking $a=q$, $b=aq$ $c=q/a$ and $N=(n-1)/2$
in the above formula, we obtain
\begin{equation*}
\sum_{k=0}^{(n-1)/2}[4k+1]\frac{(aq;q^2)_k (q/a;q^2)_k (q^{1-n};q^2)_k (q;q^2)_k}
{(aq^2;q^2)_k(q^2/a;q^2)_k (q^{n+2};q^2)_k (q^2;q^2)_k}q^{nk}
=\frac{(q^3;q^2)_{(n-1)/2}(q;q^2)_{(n-1)/2}}{(aq^2;q^2)_{(n-1)/2}(q^2/a;q^2)_{(n-1)/2}}.
\end{equation*}
Namely, when $b=q^n$ both sides of \eqref{eq:q-Long-gen-2} are equal, thus establishing this $q$-congruence.
\end{proof}

\begin{proof}[Proof of Theorem {\rm\ref{thm:main-1}}]It is clear that the polynomials $[n](1-aq^n)(a-q^n)$ and $b-q^n$ are relatively prime.
By Chinese reminder theorem for relatively prime polynomials, we can determine the remainder of the left-hand side of \eqref{eq:q-Long-gen} modulo $[n](1-aq^n)(a-q^n)(b-q^n)$
from \eqref{eq:q-Long-gen} and \eqref{eq:q-Long-gen-2}.
To accomplish this, we need the following $q$-congruences:
\begin{align}
\frac{(b-q^n)(ab-1-a^2+aq^n)}{(a-b)(1-ab)}&\equiv 1\pmod{(1-aq^n)(a-q^n)}, \label{eq:ab-1} \\[5pt]
\frac{(1-aq^n)(a-q^n)}{(a-b)(1-ab)}&\equiv 1\pmod{b-q^n}.    \label{eq:ab-2}
\end{align}
Therefore, combining \eqref{eq:q-Long-gen} and \eqref{eq:q-Long-gen-2} we obtain
\begin{align}
&\sum_{k=0}^{(n-1)/d}[4k+1]\frac{(aq;q^2)_k (q/a;q^2)_k (q/b;q^2)_k (q;q^2)_k}
{(aq^2;q^2)_k(q^2/a;q^2)_k (bq^2;q^2)_k (q^2;q^2)_k}b^k \notag\\[5pt]
&\quad\equiv\frac{(b/q)^{(n-1)/2} (q^2/b;q^2)_{(n-1)/2}}{(bq^2;q^2)_{(n-1)/2}}\frac{(b-q^n)(ab-1-a^2+aq^n)}{(a-b)(1-ab)}[n] \notag\\[5pt]
&\quad\quad+\frac{(q;q^2)_{(n-1)/2}^2 }{(aq^2;q^2)_{(n-1)/2}(q^2/a;q^2)_{(n-1)/2}}
\frac{(1-aq^n)(a-q^n)}{(a-b)(1-ab)}[n] \label{eq:abqn}
\end{align}
modulo $[n](1-aq^n)(a-q^n)(b-q^n)$.

Moreover, by \eqref{eq:lem-1} and \eqref{eq:lem-2} we have
\begin{align}
\frac{(q;q^2)_{(n-1)/2}^2}{(aq,q^2)_{(n-1)/2}(q/a,q^2)_{(n-1)/2}}
\equiv \frac{n(1-a)a^{(n-1)/2}}{(1-a^n)q^{(n-1)/2}}  \pmod{\Phi_n(q)}.  \label{eq:an-frac}
\end{align}
It is easy to see that the limit of $b-q^n$ as $b\to 1$ has the factor $\Phi_n(q)$. Meanwhile, the factor $(bq^2;q^2)_{(n-1)/d}$
in the denominator of the left-hand side of \eqref{eq:abqn} as $b\to 1$ is relatively prime to $\Phi_n(q)$.
Thus, letting $b\to 1$ in \eqref{eq:abqn} and applying \eqref{eq:an-frac}, we conclude that \eqref{eq:main-1}
is true modulo $\Phi_n(q)^2(1-aq^n)(a-q^n)$. Here we used the following relation:
\begin{equation}
(1-q^n)(1+a^2-a-aq^n)=(1-a)^2+(1-aq^n)(a-q^n).  \label{eq:relation}
\end{equation}
Note that \eqref{eq:q-Long-gen} is also true for $b=1$.
That is, the $q$-congruence \eqref{eq:main-1} is true modulo $[n]$. The proof then follows from
the fact that the least common multiple of $\Phi_n(q)^2(1-aq^n)(a-q^n)$ and $[n]$ is $[n]\Phi_n(q)(1-aq^n)(a-q^n)$.
\end{proof}

%%%%%%%%%%%%%%%%%%%%%%%%%%%%%%%%%%%%%%%%%%%%%%%%%%%%%%%%%%%%%%%%%%%%%%%%%%%%%%%%%%%%%%%%%%%%%%%%%%%%%%%%%%%%%%%%%%%%%%%%%%%%%%%%%%%%%%%%%%%%%%%%%%%%%%%%%%%%%%%%%%%%%%%%%%%%%%%%%%%%%%%%%%%%%%%%%%%%%%%%%%%%%%%%%%%%%%%%%%%%%%%%%
\section{Proof of Theorem \ref{thm:main-2}}
Similarly as before, we need the following lemma, which was proved by the author and Zudilin \cite[Theorem~4.5]{GuoZu}.
\begin{lem}
\label{thm:q-Hamme-J2L2}
Let $n\equiv r\pmod4$ be a positive odd integer, where $r=\pm1$. Then, modulo $[n](1-aq^n)(a-q^n)$,
\begin{align}
&\sum_{k=0}^{(n-1)/2}[6k+1]\frac{(aq;q^2)_k (q/a;q^2)_k (q;q^2)_k (q^2/b;q^4)_k b^kq^{k^2}}
{(aq^4;q^4)_k(q^4/a;q^4)_k(q^4;q^4)_k (bq;q^2)_k}  \notag\\
&\qquad
\equiv \frac{(q^{2+r}/b;q^4)_{(n-r)/4}}{(bq^{2+r};q^4)_{(n-r)/4}} b^{(n-r)/4}(-q)^{(1-n)/2} [n].  \label{eq:q-Hamme-J2L2}
\end{align}
\end{lem}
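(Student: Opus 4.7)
The plan is to prove Lemma \ref{thm:q-Hamme-J2L2} by the creative microscoping method: establish the congruence separately modulo each of the coprime polynomial factors $[n]$, $(1-aq^n)$, and $(a-q^n)$, then combine via Chinese remainder theorem. Call the LHS $S_n(a,b,q)$ and the RHS $R_n(a,b,q)$. Both are Laurent polynomials in $a$ (once cleared of denominators), and the three moduli are pairwise coprime as polynomials in $a$, so it suffices to verify $S_n \equiv R_n$ modulo each.

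For the factors $(1-aq^n)$ and $(a-q^n)$, I would substitute $a=q^{-n}$ and $a=q^n$ respectively into $S_n(a,b,q)$. In each case, the $(aq;q^2)_k(q/a;q^2)_k$ factor turns into a product involving $(q^{1-n};q^2)_k$ or $(q^{n+1};q^2)_k$, converting the sum into a terminating very-well-poised basic hypergeometric series. The structure with numerator entries $q, q^{1-n}, q^{n+1}, q^2/b$ (at $a=q^{-n}$) against denominator entries $q^{4-n}, q^{n+4}, q^4, bq$ and the $b^kq^{k^2}$ factor (which encodes the balanced/well-poised condition after the base change $q\mapsto q^2$) should match a specialization of Watson's ${}_8\phi_7$ transformation reducing to a summable ${}_4\phi_3$ or, more likely, directly a case of the terminating Jackson ${}_6\phi_5$ summation. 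The residue $r=n\bmod 4$ appears because after $q\mapsto q^2$ the upper bound of summation becomes $(n-r)/4$, which matches the Pochhammer length on the RHS. I would apply the appropriate summation identity and then check that the closed form obtained agrees with $R_n(q^{\pm n},b,q)$; the $b$-free factors $(-q)^{(1-n)/2}[n]$ and $b^{(n-r)/4}$ should appear naturally after rewriting the resulting quotient of $q$-Pochhammers using the standard $(q^{-m};q)_k = (-1)^k q^{-mk+\binom{k}{2}}(q;q)_m/(q;q)_{m-k}$ type identities. By the symmetry $a\leftrightarrow 1/a$ visible in $S_n$ (the ratio $(aq;q^2)_k(q/a;q^2)_k/[(aq^4;q^4)_k(q^4/a;q^4)_k]$ is $a\leftrightarrow 1/a$-invariant), the case $a=q^n$ follows from the case $a=q^{-n}$.

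For divisibility by $[n]=\prod_{d\mid n, d>1}\Phi_d(q)$, I would fix an odd $d>1$ dividing $n$ and let $\zeta$ be a primitive $d$-th root of unity. The idea is to show that at $q=\zeta$, every summand indexed by $k$ in the range $(d-1)/2 \le k \le d-1$ is cancelled by a partner in the range $0 \le k \le (d-3)/2$. Concretely, after evaluating at $q=\zeta$, one rewrites the $k$-th summand using $\zeta^d = 1$ and the reflection $k \mapsto (d-1)/2 - k$ (combined with the periodicity of the $q^4$-Pochhammers in the denominator); the $[6k+1]$ weight and the $q^{k^2}$ factor get matched correctly because $k^2$ and $6k+1$ are compatible mod $d$ after the reflection, giving overall cancellation in pairs. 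Also, for $k\ge (d+1)/2$ the factor $(q;q^2)_k|_{q=\zeta}$ picks up a zero, which truncates the sum and makes the pairing transparent. This argument is standard from \cite{GuoZu} and related works.

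The main obstacle I anticipate is Step 1, i.e.\ identifying the correct summation formula. The mixed bases $q^2$ and $q^4$ in the series mean that after $q\mapsto q^2$ one must recognize the result as a specific specialization of Jackson's terminating very-well-poised ${}_8\phi_7$ (or equivalently Watson's transformation into a balanced ${}_4\phi_3$ that further sums), and the asymmetric roles of the residues $r=\pm 1$ come from which of the two denominator factors $(q^4/a;q^4)_k$ or $(aq^4;q^4)_k$ controls the truncation at $a=q^{-n}$ vs.\ $a=q^n$; this bookkeeping, rather than any genuinely deep identity, is where the work lies. Once the closed form is extracted, the matching to $R_n$ and the CRT assembly are routine.
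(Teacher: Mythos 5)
First, a point of reference: the paper itself does not prove this lemma at all --- it is quoted verbatim from \cite[Theorem~4.5]{GuoZu}, so the comparison must be against the proof given there, which is indeed a creative-microscoping/CRT argument of the shape you describe (closed-form evaluation at $a=q^{\mp n}$ for the factors $1-aq^n$ and $a-q^n$, a root-of-unity argument for $[n]$). Your overall architecture is therefore right, but there is a genuine gap at precisely the step you flagged as the ``main obstacle'': the identification of the summation formula. The series in \eqref{eq:q-Hamme-J2L2} is \emph{not} a very-well-poised series in a single base, and neither Jackson's ${}_6\phi_5$ summation nor Watson's ${}_8\phi_7$ transformation can be made to apply to it. Three features rule this out: the summand mixes the bases $q^2$ and $q^4$; it carries the ``quadratic'' factor $q^{k^2}$; and the weight $[6k+1]$ comes from the cubic-type very-well-poised factor $(1-aq^{3k})$ (with $a=q$ after $q\mapsto q^2$), whereas the ${}_6\phi_5$ and ${}_8\phi_7$ settings are governed by $(1-aq^{2k})$, which after the same substitution yields $[4k+1]$ --- that is exactly why Jackson's formula suffices for Lemma \ref{lem:2} and the $[4k+1]$ series of Theorem \ref{thm:main-1}, but not here. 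What is actually required is Rahman's quadratic summation formula \cite[Eq.~(4.6)]{Ra93}, reproduced in this paper as \eqref{quadratic}: putting $q\mapsto q^2$, $a=q$, $d=aq$, $b\mapsto q^2/b$ there produces precisely the left-hand side of \eqref{eq:q-Hamme-J2L2}, and specializing $a=q^{\mp n}$ truncates the series and makes the infinite products on the right telescope into the stated closed form, the residue $r=\pm1$ recording how they pair up. The paper says this explicitly in the proof of the companion lemma for \eqref{eq:q-Hamme-J2L2-2}: ``The derivation of \eqref{eq:q-Hamme-J2L2} in \cite{GuoZu} uses the formula \cite[Equation~(4.6)]{Ra93}.'' Since this nonstandard identity is the substantive analytic content of the lemma, its absence is a real gap, not bookkeeping.

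A secondary problem is your modulo-$[n]$ step. As written it is internally inconsistent: you claim both that the terms with $(d+1)/2\leqslant k\leqslant d-1$ vanish at $q=\zeta$ (true, via $(q;q^2)_k$) and that the terms in this range cancel against partners in $[0,(d-3)/2]$. Moreover, the naive reflection $k\mapsto (d-1)/2-k$ does not respect the weight here: $[6((d-1)/2-k)+1]_\zeta=-\zeta^{-6k-2}[6k+2]_\zeta$, so unlike the $[4k+1]$ case the reflected weight is of a different type and no term-by-term pairing of this simple form holds. The cancellation for the $[6k+1]$ series is more delicate (one uses that at a primitive $d$-th root of unity the summand satisfies $c_{\ell d+j}(\zeta)=c_{\ell d}(\zeta)c_j(\zeta)$, since $\zeta^{(\ell d+j)^2}=\zeta^{j^2}$ and $[6(\ell d+j)+1]_\zeta=[6j+1]_\zeta$, and then proves that each complete block $\sum_{j=0}^{d-1}c_j(\zeta)$ vanishes, which again rests on the Rahman evaluation and not on a reflection). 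So both halves of your plan need the quadratic summation machinery that the proposal lacks.
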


We find that the left-hand side of \eqref{eq:q-Hamme-J2L2} has a simple formula modulo $b-q^{2n}$.
\begin{lem}
Let $n$ be a positive odd integer. Then, modulo $b-q^{2n}$,
\begin{align}
&\sum_{k=0}^{(n-1)/2}[6k+1]\frac{(aq;q^2)_k (q/a;q^2)_k (q;q^2)_k (q^2/b;q^4)_k b^kq^{k^2}}
{(aq^4;q^4)_k(q^4/a;q^4)_k(q^4;q^4)_k (bq;q^2)_k}
\equiv \frac{(q;q^2)_{(n-1)/2}(q^{n+2};q^2)_{(n-1)/2}[n]} {(aq^4;q^4)_{(n-1)/2}(q^4/a;q^4)_{(n-1)/2}}.    \label{eq:q-Hamme-J2L2-2}
\end{align}
\end{lem}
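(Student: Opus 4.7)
The plan is to parallel the treatment of Lemma 2.4. I would substitute $b = q^{2n}$ into the left-hand side of \eqref{eq:q-Hamme-J2L2-2} and verify the resulting identity directly by invoking an appropriate terminating $q$-series summation.

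First, observe that with $b = q^{2n}$ the Pochhammer $(q^2/b;q^4)_k$ becomes $(q^{2-2n};q^4)_k$. Since $n$ is odd, $2 - 2n = -4\cdot (n-1)/2$, which is exactly the standard terminating factor $(q^{-4N};q^4)_k$ with $N = (n-1)/2$. Thus the summand vanishes for $k > (n-1)/2$, the truncation at $(n-1)/2$ is automatic, and the sum is a genuinely terminating, very-well-poised basic hypergeometric series in which $[6k+1] = (1-q^{6k+1})/(1-q)$ serves as the very-well-poised prefactor.

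Next, I would match this terminating sum with a known $q$-series evaluation. Since Lemma 2.4 evaluates an analogous sum via Jackson's terminating ${}_6\phi_5$ summation (\cite[(II.21)]{GR}) after $q \mapsto q^2$, the natural candidate here is either the same formula with $q \mapsto q^4$ together with a mixed-base bookkeeping step --- using $(x;q^2)_{2k} = (x;q^4)_k (xq^2;q^4)_k$ to reconcile the factors appearing in bases $q^2$ and $q^4$ --- or a suitable cubic summation from \cite[Appendix (III.35)--(III.38)]{GR}, the latter being natural because $[6k+1]$ is of cubic type (base $q^3$ with $a=q$). In either case, after specialization of parameters, the right-hand side of the chosen identity should simplify, via elementary Pochhammer manipulations, to the claimed closed form
\[
\frac{(q;q^2)_{(n-1)/2}(q^{n+2};q^2)_{(n-1)/2}[n]}{(aq^4;q^4)_{(n-1)/2}(q^4/a;q^4)_{(n-1)/2}}.
\]

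The main obstacle will be pinning down the correct specialization of the $q$-series identity that fits the mixed-base structure of the summand together with the cubic-type very-well-poised factor. If none of the standard identities applies cleanly in one shot, a workable fallback is a rational-function argument in $a$: both sides are rational functions of $a$ with controllable degree, so checking equality at $O(n)$ specialized values of $a$ --- for instance $a = q^{2j+1}$ for $0 \le j \le (n-1)/2$, where the Pochhammers $(aq;q^2)_k$ or $(q/a;q^2)_k$ force early termination of the series and both sides visibly collapse --- suffices to conclude equality as rational functions.
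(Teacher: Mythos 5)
Your top-level strategy is the same as the paper's: substitute $b=q^{2n}$, note that $(q^2/b;q^4)_k$ becomes the terminating factor $(q^{2-2n};q^4)_k$ with $2-2n=-4\cdot\frac{n-1}{2}$, and reduce the congruence to a closed-form evaluation of the resulting terminating sum. The gap is that you never pin down the summation formula that actually does the work, and the two candidates you name do not fit. Jackson's ${}_6\phi_5$ in base $q^4$ cannot be reconciled with the summand via $(x;q^2)_{2k}=(x;q^4)_k(xq^2;q^4)_k$, because the base-$q^2$ factors here are $(aq;q^2)_k$, $(q/a;q^2)_k$, $(q;q^2)_k$, $(bq;q^2)_k$ with index $k$ rather than $2k$, so they do not split into base-$q^4$ Pochhammers. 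The cubic summations of Gasper--Rahman mix bases $q$ and $q^3$, whereas the present summand mixes $q^2$ and $q^4$ (i.e., $Q$ and $Q^2$ with $Q=q^2$), with well-poised factor $[6k+1]=(1-aQ^{3k})/(1-q)$ at $a=q$; this is the signature of a \emph{quadratic} summation. The identity the paper uses is Rahman's quadratic summation \eqref{quadratic} (Equation (4.6) of \cite{Ra93}), a nonterminating formula not in the standard GR appendix: after $q\mapsto q^2$, $a=q$, $d=aq$, $b\mapsto q^2/b$ it reproduces the left-hand side exactly, and setting $b=q^{2n}$ terminates both sides and yields $(q^3;q^2)_{n-1}/\bigl((aq^4;q^4)_{(n-1)/2}(q^4/a;q^4)_{(n-1)/2}\bigr)$, which is the claimed right-hand side. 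Without locating this (or an equivalent) formula, the proof does not close.

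Your fallback is also not a complete argument as stated. It is true that, after clearing denominators, both sides are polynomials in $a$ of controlled degree (and symmetric under $a\mapsto 1/a$), so finitely many evaluations would suffice in principle. But at $a=q^{2j+1}$ the series truncates to $j+1$ terms rather than collapsing: only the case $j=0$ is a visible one-term check. For $j\geqslant 1$ you would still have to evaluate each truncated sum in closed form, which is a family of identities of essentially the same difficulty as the original. So the fallback merely relocates the problem.
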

\begin{proof}
The derivation of \eqref{eq:q-Hamme-J2L2} in \cite{GuoZu} uses the formula \cite[Equation (4.6)]{Ra93}:
\begin{align}
&
\sum_{k=0}^\infty\frac{(a;q)_k (1-aq^{3k})(d;q)_k(q/d;q)_k(b;q^2)_k}
{(q^2;q^2)_k (1-a)(aq^2/d;q^2)_k (adq;q^2)_k (aq/b;q)_k}\frac{a^k q^{k+1\choose 2}}{b^k}
\notag\\ &\qquad
=\frac{(aq;q)_\infty (adq/b;q^2)_\infty (aq^2/bd;q^2)_\infty}
{(aq/b;q)_\infty (aq^2/d;q^2)_\infty (adq;q^2)_\infty}.
\label{quadratic}
\end{align}
Letting $q\mapsto q^2$ and taking $a=q$, $d=aq$ and $b\mapsto q^2/b$, we are led to
\begin{align*}
&\sum_{k=0}^{\infty}[6k+1]\frac{(aq;q^2)_k (q/a;q^2)_k (q;q^2)_k (q^2/b;q^4)_k b^kq^{k^2}}
{(aq^4;q^4)_k(q^4/a;q^4)_k(q^4;q^4)_k (bq;q^2)_k}  \\[5pt]
&\quad=\frac{(q^3;q^2)_\infty (abq^2;q^4)_{\infty}(bq^2/a;q^4)_{\infty}}{(bq;q^2)_\infty(aq^4;q^4)_{\infty} (q^4/a;q^4)_{\infty}}.
\end{align*}
Substituting $b=q^{2n}$ into the above identity, we obtain
\begin{align*}
&\sum_{k=0}^{(n-1)/2}[6k+1]\frac{(aq;q^2)_k (q/a;q^2)_k (q;q^2)_k (q^{2-2n};q^4)_k q^{k^2+nk}}
{(aq^4;q^4)_k(q^4/a;q^4)_k(q^4;q^4)_k (q^{n+1};q^2)_k}
= \frac{(q^3;q^2)_{n-1} } {(aq^4;q^4)_{(n-1)/2}(q^4/a;q^4)_{(n-1)/2}}.
\end{align*}
Namely, the $q$-congruence \eqref{eq:q-Hamme-J2L2-2} holds.
\end{proof}

\begin{proof}[Proof of Theorem {\rm\ref{thm:main-2}}]
Suppose that $n\equiv r\pmod4$ with $r=\pm1$.
By \eqref{eq:q-Hamme-J2L2} and \eqref{eq:q-Hamme-J2L2-2} with $b\mapsto b^2$, we have the following $q$-congruence modulo $[n](1-aq^n)(a-q^n)(b-q^n)$:
\begin{align}
&\sum_{k=0}^{(n-1)/2}[6k+1]\frac{(aq;q^2)_k (q/a;q^2)_k (q;q^2)_k (q^2/b^2;q^4)_k b^{2k}q^{k^2}}
{(aq^4;q^4)_k(q^4/a;q^4)_k(q^4;q^4)_k (b^2q;q^2)_k}   \notag\\[5pt]
&\quad\equiv\frac{(q^{2+r}/b;q^4)_{(n-r)/4}}{(bq^{2+r};q^4)_{(n-r)/4}} b^{(n-r)/4}(-q)^{(1-n)/2}
\frac{(b-q^n)(ab-1-a^2+aq^n)}{(a-b)(1-ab)}[n] \notag\\[5pt]
&\quad\quad+\frac{(q;q^2)_{(n-1)/2}(q^{n+2};q^2)_{(n-1)/2}} {(aq^4;q^4)_{(n-1)/2}(q^4/a;q^4)_{(n-1)/2}}
\frac{(1-aq^n)(a-q^n)}{(a-b)(1-ab)}[n], \label{eq:abqn-new}
\end{align}
where we have used the $q$-congruences \eqref{eq:ab-1} and \eqref{eq:ab-2}. It is clear that $q^n\equiv 1\pmod{\Phi_n(q)}$.
By \eqref{eq:lem-1} (with $q\mapsto q^2$) and \eqref{eq:q-bino-a} (with $a=1$), we have
\begin{align}
\frac{(q;q^2)_{(n-1)/2}(q^{n+2};q^2)_{(n-1)/2}} {(aq^4;q^4)_{(n-1)/2}(q^4/a;q^4)_{(n-1)/2}}
&\equiv \frac{(q;q^2)_{(n-1)/2}(q^{2};q^2)_{(n-1)/2}} {(aq^4;q^4)_{(n-1)/2}(q^4/a;q^4)_{(n-1)/2}} \notag\\[5pt]
&\equiv (-q)^{(1-n)/2}\frac{n(1-a)a^{(n-1)/2}}{1-a^n} \pmod{\Phi_n(q)}.  \label{eq:an-frac-new}
\end{align}
Like the proof of Theorem \ref{thm:main-1}, the limit of $b-q^n$ as $b\to 1$ has the factor $\Phi_n(q)$ and the factor $(b^2q;q^2)_{(n-1)/2}$
in the denominator of \eqref{eq:abqn-new} as $b\to 1$ is coprime with $\Phi_n(q)$.
Thus, letting $b\to 1$ in \eqref{eq:abqn} and applying \eqref{eq:an-frac-new} and  \eqref{eq:relation}, we conclude that \eqref{eq:main-2}
is true modulo $\Phi_n(q)^2(1-aq^n)(a-q^n)$.
Further, the $q$-congruence \eqref{eq:q-Hamme-J2L2} also holds for $b=1$, i.e.,
the $q$-congruence \eqref{eq:main-2} is true modulo $[n]$. This completes the proof.
\end{proof}

\section{Proof of Theorem \ref{thm:main-3}}

We first give the following $q$-congruence, which follows from the $c\to 0$ case of \cite[Theorem 6.1]{GS} (see also\cite[Conjecture 4.6]{GuoZu}).
\begin{lem}
Let $n$ be a positive odd integer. Then, modulo $[n](1-aq^n)(a-q^n)$,
\begin{align}
\sum_{k=0}^{n-1}[3k+1]\frac{(aq;q^2)_k(q/a;q^2)_k(q;q^2)_k(q/b;q)_k b^kq^{-{k+1\choose 2}}}
{(aq;q)_k(q/a;q)_k(q;q)_k(bq^2;q^2)_k}
\equiv\frac{(b/q)^{(n-1)/2}(q^2/b;q^2)_{(n-1)/2} }
{(bq^2;q^2)_{(n-1)/2}}[n].
\label{eq:conj-3k+1}
\end{align}
\end{lem}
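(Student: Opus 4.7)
The plan is to follow the creative microscoping philosophy used throughout the paper: since $[n]$ and $(1-aq^n)(a-q^n)$ are coprime as polynomials in $a$ and $q$, it suffices to verify the congruence separately modulo each of these factors and then combine.

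For the factor $(1-aq^n)(a-q^n)$, I would evaluate both sides at $a=q^{-n}$ and at $a=q^{n}$. The summand is manifestly invariant under $a\mapsto 1/a$ (the pairs $(aq;q^2)_k(q/a;q^2)_k$ and $(aq;q)_k(q/a;q)_k$ are symmetric under this swap), and the right-hand side does not depend on $a$ at all; so it suffices to handle a single specialization, say $a=q^{-n}$. Upon this substitution the Pochhammer $(aq;q^2)_k=(q^{1-n};q^2)_k$ vanishes for $k\geqslant (n+1)/2$, so the sum truncates to a terminating basic hypergeometric series. The remaining task is to match it, via a suitable closed-form evaluation from Gasper and Rahman (most likely a very-well-poised ${}_8\phi_7$ summation, or a limiting case of Jackson's ${}_6\phi_5$), to the right-hand side specialized at $a=q^{-n}$.

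For the factor $[n]$, since the right-hand side carries $[n]$ as an explicit factor, we need the left-hand side to vanish modulo $[n]$. Fixing a divisor $d>1$ of $n$ and setting $q=\zeta$ a primitive $d$-th root of unity, this reduces to showing that the resulting truncated sum vanishes at $\zeta$ (the numerator factors $(q;q^2)_k$ and $(q;q)_k$ force truncation once $k$ is large enough relative to $d$). The standard device is an involution $k\leftrightarrow d-1-k$ on the index range, under which the summand at $q=\zeta$ picks up an overall sign coming from the reflection symmetry of the Pochhammer factors; paired terms then cancel, along the lines of the vanishing arguments in \cite{GuoZu}.

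The main obstacle I anticipate is the first step: pinpointing the terminating $q$-hypergeometric identity that evaluates the $a=q^{-n}$ case. The summand mixes $q^2$-shifted and $q$-shifted Pochhammers with an explicit $q^{-{k+1\choose 2}}$ factor and the free parameter $b$, none of which matches a classical summation directly, so a preliminary transformation (for instance, a Sears or Watson transformation) may be needed to bring the truncated series into a recognizable well-poised form. As the paper itself indicates, the lemma is the $c\to 0$ specialization of \cite[Theorem 6.1]{GS}, so an alternative and much more economical route is simply to track that limit in the three-parameter $q$-congruence already established there, verifying that both sides of the three-parameter identity degenerate in the expected way.
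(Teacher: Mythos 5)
The paper offers no self-contained proof of this lemma at all: its entire justification is the citation you mention in your last paragraph, namely that \eqref{eq:conj-3k+1} is the $c\to 0$ case of \cite[Theorem 6.1]{GS} (stated earlier as \cite[Conjecture 4.6]{GuoZu}). So your ``economical alternative'' is not an alternative but the intended argument, and as far as matching the paper goes you have identified it correctly; all that remains on that route is the routine check that the limit $c\to 0$ can be passed through both sides of the three-parameter congruence of \cite{GS}.

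Your primary, from-scratch outline follows the right general strategy (split the modulus into $[n]$, $1-aq^n$, $a-q^n$; use the $a\mapsto 1/a$ symmetry to reduce to $a=q^{-n}$), but it has a genuine gap at each of the two substantive steps. First, the terminating sum obtained at $a=q^{-n}$ is not a classical very-well-poised series in a single base: it mixes $q$- and $q^2$-shifted factorials, so neither Jackson's ${}_6\phi_5$ nor Watson's ${}_8\phi_7$ transformation applies directly. The evaluation actually used in the literature (and deployed two lemmas later in this very paper, for the companion congruence modulo $b-q^n$) goes through the quadratic transformation \cite[Equation (3.8.13)]{GR} combined with the $q$-Pfaff--Saalsch\"utz theorem; without naming such an identity the decisive computation is missing. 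Second, for the modulus $[n]$ the mechanism in \cite{GuoZu,GS} is not a single involution on $\{0,\dots,d-1\}$: at a primitive $d$-th root of unity $\zeta$ (with $d\mid n$, $d>1$) one shows the summand is quasi-periodic with period $d$ and that each block of $d$ consecutive terms sums to zero, the latter being deduced from the already-established evaluation at $a=q^{\pm d}$, whose right-hand side carries the factor $[d]$ and hence vanishes at $q=\zeta$. Note also that $(\zeta;\zeta^2)_k$ already vanishes for $k\geqslant (d+1)/2$, so your proposed pairing $k\leftrightarrow d-1-k$ does not match the actual support of the block sum and would need to be replaced or justified.
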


We also have a simple $q$-congruence for the left-hand side of \eqref{eq:conj-3k+1} modulo $b-q^{n}$.
\begin{lem}
Let $n$ be a positive odd integer. Then, modulo $b-q^n$,
\begin{align}
\sum_{k=0}^{n-1}[3k+1]\frac{(aq;q^2)_k(q/a;q^2)_k(q;q^2)_k(q/b;q)_k b^kq^{-{k+1\choose 2}}}
{(aq;q)_k(q/a;q)_k(q;q)_k(bq^2;q^2)_k}
\equiv
\frac{(q;q^2)_{(n-1)/2}^2[n]} {(aq^2;q^2)_{(n-1)/2}(q^2/a;q^2)_{(n-1)/2}}.
\label{eq:conj-3k+1-2}
\end{align}
\end{lem}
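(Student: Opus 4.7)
Following the pattern of Lemma~\ref{lem:2} (the $[4k+1]$ analogue for Theorem~\ref{thm:main-1}) and its counterpart for Theorem~\ref{thm:main-2}, the plan is to reduce \eqref{eq:conj-3k+1-2} to an exact equality obtained by substituting $b=q^n$. Since the denominators on the left-hand side are coprime, as polynomials in $b$, to the linear polynomial $b-q^n$, a $q$-congruence modulo $b-q^n$ is equivalent to this equality. Upon setting $b=q^n$, the factor $(q/b;q)_k=(q^{1-n};q)_k$ contains the zero $1-q^0$ for every $k\geqslant n$, so the left-hand side truncates to the terminating $q$-series
\begin{equation*}
S \,:=\, \sum_{k=0}^{n-1}[3k+1]\,\frac{(aq;q^2)_k(q/a;q^2)_k(q;q^2)_k(q^{1-n};q)_k\, q^{nk-\binom{k+1}{2}}}{(aq;q)_k(q/a;q)_k(q;q)_k(q^{n+2};q^2)_k},
\end{equation*}
and the task reduces to evaluating $S$ in closed form.

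The key step is to recognise $S$ as a specialisation of a known $q$-hypergeometric summation identity. The very-well-poised step $[3k+1]=(1-q\cdot q^{3k})/(1-q)$, combined with the simultaneous presence of base-$q$ and base-$q^2$ Pochhammer factors, is characteristic of Rahman's quadratic summation formula \cite[Eq.~(4.6)]{Ra93}---the same identity invoked in the proof of Theorem~\ref{thm:main-2}. Here, however, I would apply it \emph{without} the substitution $q\mapsto q^2$, specialising $a\to q$ (which produces $[3k+1]$) and then selecting the parameters $d$ and $b$ of Rahman's formula so that the pairs $(d;q)_k(q/d;q)_k$ and $(b;q^2)_k$ reproduce the mix of $(aq;q^2)_k$, $(q/a;q^2)_k$ and $(q^{1-n};q)_k$ in the numerator of $S$, with the corresponding base-$q^2$ factors on the right accounting for the denominator.

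Once Rahman's formula is applied, its right-hand side is a finite product of $q$-Pochhammer symbols. Routine manipulations---such as the identity $(q^3;q^2)_{(n-1)/2}=(q;q^2)_{(n-1)/2}[n]$ used already in the proof of Lemma~\ref{lem:2}, together with the base-splitting relation $(xq;q)_{2m}=(xq;q^2)_m(xq^2;q^2)_m$ needed to convert any residual base-$q$ factors into base-$q^2$ form---should collapse this product to the target expression on the right-hand side of \eqref{eq:conj-3k+1-2}.

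The principal obstacle will be matching the parameters of Rahman's formula to the precise shape of $S$: in particular, the summand of $S$ carries $q^{-\binom{k+1}{2}}$, whereas the canonical form of Rahman's summation carries $q^{+\binom{k+1}{2}}$. Resolving this sign discrepancy will likely require either a preliminary reversal of summation $k\mapsto (n-1)-k$ in $S$ (which can be made explicit via $(q^{1-n};q)_k=(-1)^k q^{\binom{k}{2}-(n-1)k}(q;q)_{n-1}/(q;q)_{n-1-k}$, an identity that already interacts cleanly with $q^{nk-\binom{k+1}{2}}$), or the use of an equivalent ``inverted'' variant of the quadratic summation. Once this alignment is secured, the remaining reduction to the claimed closed form is a mechanical $q$-Pochhammer computation.
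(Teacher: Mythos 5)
Your overall strategy coincides with the paper's first step: since the denominators on the left-hand side are coprime to $b-q^n$, the congruence reduces to an exact evaluation of the terminating sum $S$ obtained by setting $b=q^n$. The genuine gap is in the evaluation itself. Rahman's quadratic summation \cite[Eq.~(4.6)]{Ra93}, taken in base $q$ with $a=q$, cannot be matched to $S$. After the cancellations $(aq;q^2)_k/(aq;q)_k=1/(aq^2;q^2)_k$, $(q/a;q^2)_k/(q/a;q)_k=1/(q^2/a;q^2)_k$ and $(q;q^2)_k/(q;q)_k=1/(q^2;q^2)_k$, the lemma's parameter $a$ enters $S$ \emph{only} through the denominator pair $(aq^2;q^2)_k(q^2/a;q^2)_k$, with no base-$q$ numerator pair left over. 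By contrast, the summand of (4.6) at $a=q$ carries the numerator pair $(d;q)_k(q/d;q)_k$ over the denominator pair $(q^3/d;q^2)_k(dq^2;q^2)_k$: choosing $d=a$ leaves $q^3/a$ where $q^2/a$ is needed, and choosing $d=1/a$ leaves $aq^3$ where $aq^2$ is needed, so one factor is always off by a power of $q$; moreover $(a;q)_k/(q^2;q^2)_k\big|_{a=q}=1/(-q;q)_k$ does not reproduce the $1/(q^2;q^2)_k$ present in $S$. Reversing the order of summation, which you offer as a remedy for the sign of $\binom{k+1}{2}$, cannot repair this, because the obstruction lies in the multiplicative structure of the Pochhammer parameters rather than in the quadratic power of $q$.

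The paper evaluates $S$ by a different route: it invokes the terminating two-extra-parameter identity recorded in the proof of \cite[Theorem 4.8]{GuoZu}, which is derived from the transformation \cite[Eq.~(3.8.13)]{GR} combined with the Pfaff--Saalsch\"utz summation \cite[Appendix (II.12)]{GR}; setting $N=(n-1)/2$ and letting the auxiliary parameter tend to $0$ produces exactly the factor $(q^{1-n};q)_kq^{nk-\binom{k+1}{2}}/(q^{n+2};q^2)_k$ together with the claimed right-hand side $(q;q^2)_{(n-1)/2}^2[n]/\bigl((aq^2;q^2)_{(n-1)/2}(q^2/a;q^2)_{(n-1)/2}\bigr)$. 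So the reduction step of your argument is sound, but the closed-form evaluation --- which is the actual content of the lemma --- is not delivered by the summation formula you propose.
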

\begin{proof}Using the transformation formula \cite[Equation (3.8.13)]{GR} and the Pfaff--Saalsch\"utz theorem \cite[Appendix (II.12)]{GR},
in the sketch of proof of \cite[Theorem 4.8]{GuoZu} the author and Zudilin gave
\begin{align*}
&\sum_{k=0}^{2N}[3k+1]\frac{(aq;q^2)_k(q/a;q^2)_k(q;q^2)_k(q/b;q)_k(q^{-2N};q)_k(bq^{1+2N};q)_k\,q^k}
{(aq;q)_k(q/a;q)_k(q;q)_k(bq^2;q^2)_k(q^{3+2N};q^2)_k(q^{2-2N}/b;q^2)_k} \\[5pt]
&\quad =\frac{(q^{1-2N};q^2)_\infty(q^2/b;q^2)_\infty(bq^{2+2N};q^2)_\infty}
{(1-q)\,(q^{3+2N};q^2)_\infty(q^{2-2N}/b;q^2)_\infty(bq^2;q^2)_\infty}
\frac{(abq;q^2)_N(aq^{1-2N}/b;q^2)_N}{(aq^2;q^2)_N(aq^{-2N};q^2)_N} \\[5pt]
&\quad =\frac{(q;q^2)_N^2(abq;q^2)_N(bq/a;q^2)_N [2N+1]}
{(b;q^2)_N(bq^2;q^2)_N(aq^2;q^2)_N(q^2/a;q^2)_N}
\end{align*}
(we correct a typo in the first equality here). Letting $N=(n-1)/2$ and $b\to 0$ in the above identity, we are led to
\begin{align*}
\sum_{k=0}^{n-1}[3k+1]\frac{(aq;q^2)_k(q/a;q^2)_k(q;q^2)_k(q^{1-n};q)_k q^{nk-{k+1\choose 2}}}
{(aq;q)_k(q/a;q)_k(q;q)_k(q^{n+2};q^2)_k}
=\frac{(q;q^2)_{(n-1)/2}^2[n]} {(aq^2;q^2)_{(n-1)/2}(q^2/a;q^2)_{(n-1)/2}}.
\end{align*}
Namely, the desired $q$-congruence holds.
\end{proof}

\begin{proof}[Proof of Theorem {\rm\ref{thm:main-3}}]
From \eqref{eq:conj-3k+1} and \eqref{eq:conj-3k+1-2} we can deduce that, modulo $[n](1-aq^n)(a-q^n)(b-q^n)$,
\begin{align}
&\sum_{k=0}^{n-1}[3k+1]\frac{(aq;q^2)_k(q/a;q^2)_k(q;q^2)_k(q/b;q)_k b^kq^{-{k+1\choose 2}}}
{(aq;q)_k(q/a;q)_k(q;q)_k(bq^2;q^2)_k}  \notag\\[5pt]
&\quad\equiv\frac{(b/q)^{(n-1)/2}(q^2/b;q^2)_{(n-1)/2} }{(bq^2;q^2)_{(n-1)/2}}
\frac{(b-q^n)(ab-1-a^2+aq^n)}{(a-b)(1-ab)}[n] \notag\\[5pt]
&\quad\quad+\frac{(q;q^2)_{(n-1)/2}^2} {(aq^2;q^2)_{(n-1)/2}(q^2/a;q^2)_{(n-1)/2}}
\frac{(1-aq^n)(a-q^n)}{(a-b)(1-ab)}[n], \label{eq:3k+1-new}
\end{align}
where we have utilized \eqref{eq:ab-1} and \eqref{eq:ab-2}. Note that the right-hand sides of \eqref{eq:abqn}
and \eqref{eq:3k+1-new} are exactly the same. Thus, letting $b\to 1$ in \eqref{eq:3k+1-new}, we arrive at
\eqref{eq:main-3}.
\end{proof}

\section{Concluding remarks and open problems}

We first give the following $q$-congruence related to Theorem \ref{thm:main-1}. To the best of our knowledge, this is
the first $q$-congruence modulo $[n]\Phi_n(q)(1-aq^n)(a-q^n)(b-q^n)$ in the literature.
\begin{thm}
Let $n$ be a positive odd integer. Then, modulo $[n]\Phi_n(q)(1-aq^n)(a-q^n)(b-q^n)$,
\begin{align}
&\sum_{k=0}^{(n-1)/2}[4k+1]\frac{(aq;q^2)_k (q/a;q^2)_k (q/b;q^2)_k (q;q^2)_k}
{(aq^2;q^2)_k(q^2/a;q^2)_k (bq^2;q^2)_k (q^2;q^2)_k}b^k \notag\\[5pt]
&\quad\equiv \sum_{k=0}^{n-1}[4k+1]\frac{(aq;q^2)_k (q/a;q^2)_k (q/b;q^2)_k (q;q^2)_k}
{(aq^2;q^2)_k(q^2/a;q^2)_k (bq^2;q^2)_k (q^2;q^2)_k}b^k. \label{eq:equiv}
\end{align}
In particular, we have
\begin{align}
\sum_{k=0}^{(n-1)/2}[4k+1]\frac{(q;q^2)_k^4}{(q^2;q^2)_k^4}
\equiv \sum_{k=0}^{n-1}[4k+1]\frac{(q;q^2)_k^4}{(q^2;q^2)_k^4} \pmod{[n]\Phi_n(q)^4}.  \label{eq:equiv-2}
\end{align}
\end{thm}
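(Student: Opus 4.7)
Let $c_k$ denote the common summand appearing on each side of \eqref{eq:equiv}. The equivalence is the assertion that the tail
\[
T:=\sum_{k=(n+1)/2}^{n-1}c_k
\]
is divisible by $M:=[n]\Phi_n(q)(1-aq^n)(a-q^n)(b-q^n)$. Since $[n]=\Phi_n(q)\cdot\bigl([n]/\Phi_n(q)\bigr)$, I would factor $M$ as the product of the pairwise coprime polynomials $\Phi_n(q)^2$, $[n]/\Phi_n(q)$, $1-aq^n$, $a-q^n$ and $b-q^n$, and verify $T\equiv0$ modulo each factor separately, combining the results via the Chinese remainder theorem.

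For any $k$ in the tail range $(n+1)/2\leqslant k\leqslant n-1$, the numerator Pochhammers $(q;q^2)_k$, $(aq;q^2)_k$, $(q/a;q^2)_k$, $(q/b;q^2)_k$ each contain the factor indexed by $(n-1)/2$, which equals $1-q^n$, $1-aq^n$, $1-q^n/a$ and $1-q^n/b$, respectively. Meanwhile every denominator Pochhammer $(aq^2;q^2)_k,(q^2/a;q^2)_k,(bq^2;q^2)_k,(q^2;q^2)_k$ is coprime to $\Phi_n(q),1-aq^n,a-q^n$ and $b-q^n$, since the exponents appearing there run through even values bounded by $2n-2$. Consequently every individual $c_k$ in the tail, hence $T$ itself, is divisible by $[n](1-aq^n)(a-q^n)(b-q^n)$, which handles four of the five CRT pieces.

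The remaining task is $T\equiv0\pmod{\Phi_n(q)^2}$. Because each tail term carries only a single $\Phi_n(q)$ (coming from $1-q^n$), I would write $T=(1-q^n)U$ and reduce the problem to showing that the rational function $U$ vanishes at every primitive $n$-th root of unity $\zeta$. Setting $k=(n-1)/2+m$ with $m=1,\ldots,(n-1)/2$ and applying $q^n\equiv1$, $q^{n+j}\equiv q^j\pmod{\Phi_n(q)}$, one finds $[4k+1]\equiv[4m-1]$ together with $(aq^2;q^2)_k\equiv(aq^2;q^2)_{(n-1)/2}(aq;q^2)_m$ and the analogous factorisations of the other Pochhammers. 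After pulling out an $m$-independent prefactor, the condition $U\equiv0\pmod{\Phi_n(q)}$ boils down to
\[
\sum_{m=1}^{(n-1)/2}[4m-1]\,\frac{(aq^2;q^2)_{m-1}(q^2/a;q^2)_{m-1}(q^2/b;q^2)_{m-1}(q^2;q^2)_{m-1}}{(aq;q^2)_m(q/a;q^2)_m(bq;q^2)_m(q;q^2)_m}\,b^m\equiv0\pmod{\Phi_n(q)}.
\]

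The hard part is establishing this reduced identity, and that is where I expect the main technical work to live. My proposed strategy is a pairing $m\leftrightarrow m':=(n+1)/2-m$. Evaluated at $q=\zeta$, the reflection $1-x\zeta^{n-j}=-x\zeta^{-j}(1-\zeta^j/x)$ rewrites, after clearing a common denominator, the $m'$-summand as an explicit monomial in $a,b,\zeta$ times the $m$-summand; combined with the reciprocity $[4m'-1]=-\zeta^{1-4m}[4m-1]$ at $q=\zeta$ (which follows from $4m'-1\equiv1-4m\pmod n$), the paired contribution takes the form $[4m-1]\cdot(\zeta^{\alpha}-\zeta^{\alpha'})$ where the exponent bookkeeping of the reflections forces $\alpha\equiv\alpha'\pmod n$, so each pair cancels modulo $\Phi_n(q)$. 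When $n\equiv3\pmod4$ the single unpaired middle term at $m=(n+1)/4$ carries the factor $[4m-1]=[n]$, which is already zero modulo $\Phi_n(q)$. I have checked this scheme explicitly in the cases $n=5$ and $n=7$; extending the $\zeta$-exponent bookkeeping to all odd $n$ is the real content. Once this pairing argument is in place, the in-particular statement \eqref{eq:equiv-2} is immediate: specialising $a=b=1$ in \eqref{eq:equiv} turns the modulus into $[n]\Phi_n(q)(1-q^n)^3$, which is divisible by $[n]\Phi_n(q)^4$ because $(1-q^n)^3/\Phi_n(q)^3=\bigl((1-q)[n]/\Phi_n(q)\bigr)^3$ is a polynomial.
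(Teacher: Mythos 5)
Your architecture matches the paper's: isolate the tail $\sum_{k=(n+1)/2}^{n-1}c_k$, note that each tail term already contains $1-q^n$, $1-aq^n$, $a-q^n$ and $b-q^n$ in its numerator Pochhammers while its denominator stays coprime to the modulus, and then win one more power of $\Phi_n(q)$ by reflecting the index and pairing terms. Your termwise-divisibility treatment of the factors $[n](1-aq^n)(a-q^n)(b-q^n)$ is correct and in fact a little more direct than the paper's (which instead observes that the $d=1$ and $d=2$ cases of Lemma \ref{th:4.2} yield the same right-hand side in \eqref{eq:abqn}); your reduced sum after setting $k=(n-1)/2+m$ is the right one; your remark that the fixed point $m=(n+1)/4$ contributes $[4m-1]=[n]\equiv0$ when $n\equiv3\pmod4$ is correct; and the passage from \eqref{eq:equiv} to \eqref{eq:equiv-2} at $a=b=1$ is legitimate because the surviving denominators $(q^2;q^2)_k^4$ with $k\leqslant n-1$ remain coprime to $\Phi_n(q)$.

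The gap is exactly the one you flag yourself: the cancellation of the $m$-th and $\bigl((n+1)/2-m\bigr)$-th terms modulo $\Phi_n(q)$ is asserted, checked only for $n=5,7$, and never proved for general odd $n$. Since everything else is routine, this pairing is the entire mathematical content of the $\Phi_n(q)^2$ step, and it cannot be delegated to ``exponent bookkeeping to be extended.'' The paper carries it out by first reflecting via $k\mapsto n-k$ using
$(a;q)_{n-k}/(b;q)_{n-k}\equiv\frac{(a;q)_n(q/b;q)_k}{(b;q)_n(q/a;q)_k}(b/a)^k\pmod{\Phi_n(q)}$,
which also exhibits the prefactor $(q;q^2)_n\equiv0\pmod{\Phi_n(q)}$, and then establishing, in the style of \cite[Lemma 3.1]{GS}, the two congruences
\begin{align*}
\frac{(a;q^2)_{(n+1)/2-k}}{(q/a;q^2)_{(n+1)/2-k}}
&\equiv (-a)^{(n+1)/2-2k}\frac{(a;q^2)_k}{(q/a;q^2)_k}\,q^{(n^2-1)/4+k} \pmod{\Phi_n(q)},\\[3pt]
\frac{(q^2;q^2)_{(n+1)/2-k-1}}{(q;q^2)_{(n+1)/2-k}}
&\equiv (-1)^{(n+1)/2}\frac{(q^2;q^2)_{k-1}}{(q;q^2)_k}\,q^{(n^2-1)/4+k}\pmod{\Phi_n(q)}
\end{align*}
for $1\leqslant k\leqslant(n-1)/2$. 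Applying the first with $a$, $1/a$ and $1/b$ and combining with the second and with the reflection of $[1-4k]$ and of $b^kq^{4k}$, one checks that all accumulated signs and powers of $q$, $a$, $b$ conspire to make the paired summands negatives of each other modulo $\Phi_n(q)$. Proving these two congruences (an elementary computation using $q^n\equiv1$) and carrying out that final product is precisely what your write-up is missing; until it is supplied, the proposal is an outline rather than a proof.
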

\begin{proof}
By \cite[Appendix (I.11)]{GR}, we have
$$
\frac{(a;q)_{n-k}}{(b;q)_{n-k}}=\frac{(a;q)_n (q^{1-n}/b;q)_k}{(b;q)_n (q^{1-n}/a;q)_k}\left(\frac{b}{a}\right)^k
\equiv\frac{(a;q)_n (q/b;q)_k}{(b;q)_n (q/a;q)_k}\left(\frac{b}{a}\right)^k \pmod{\Phi_n(q)}.
$$
It follows that
\begin{align}
&\sum_{k=(n+1)/2}^{n-1}[4k+1]\frac{(aq;q^2)_k (q/a;q^2)_k (q/b;q^2)_k (q;q^2)_k}
{(aq^2;q^2)_k(q^2/a;q^2)_k (bq^2;q^2)_k (q^2;q^2)_k}b^k  \notag \\[5pt]
&\quad=\sum_{k=1}^{(n-1)/2}[4(n-k)+1]\frac{(aq;q^2)_{n-k} (q/a;q^2)_{n-k} (q/b;q^2)_{n-k} (q;q^2)_{n-k}}
{(aq^2;q^2)_{n-k}(q^2/a;q^2)_{n-k} (bq^2;q^2)_{n-k} (q^2;q^2)_{n-k}}b^{n-k} \notag\\[5pt]
&\quad\equiv \frac{(aq;q^2)_n (q/a;q^2)_n (q/b;q^2)_n (q;q^2)_n b^n}
{(aq^2;q^2)_n(q^2/a;q^2)_n (bq^2;q^2)_n (q^2;q^2)_{n-1}}  \notag\\[5pt]
&\qquad\times\sum_{k=1}^{(n-1)/2}[1-4k]\frac{(1/a;q^2)_{k} (a;q^2)_{k} (1/b;q^2)_{k} (q^{2};q^2)_{k-1}}
{(q/a;q^2)_{k}(aq;q^2)_{k} (bq;q^2)_{k} (q;q^2)_{k}}b^kq^{4k}  \pmod{\Phi_n(q)^2}, \label{eq:sum}
\end{align}
where we have used the fact $(q;q^2)_n\equiv 0\pmod{\Phi_n(q)}$. Like \cite[Lemma 3.1]{GS}, we can show that
\begin{align*}
\frac{(a;q^2)_{(n+1)/2-k}}{(q/a;q^2)_{(n+1)/2-k}}
&\equiv (-a)^{(n+1)/2-2k}\frac{(a;q^2)_k}{(q/a;q^2)_k} q^{(n^2-1)/4+k} \pmod{\Phi_n(q)},\\[5pt]
\frac{(q^2;q^2)_{(n+1)/2-k-1}}{(q;q^2)_{(n+1)/2-k}}
&\equiv (-1)^{(n+1)/2}\frac{(q^2;q^2)_{k-1}}{(q;q^2)_k} q^{(n^2-1)/4+k}
\end{align*}
for $1\leqslant k\leqslant (n-1)/2$, and so the $k$-th and $(n+1)/2-k$-th terms in the summation of the right-hand
side of  \eqref{eq:sum} cancel each other modulo $\Phi_n(q)$.
Noticing that the fraction before the summation is congruent to $0$ modulo $\Phi_n(q)$ too,
we conclude that the right-hand side of  \eqref{eq:sum} is congruent to $0$ modulo $\Phi_n(q)^2$.
This proves \eqref{eq:equiv} modulo $\Phi_n(q)^2$. By the proof of Theorem \ref{thm:main-1}, we know that
\eqref{eq:equiv} also holds modulo $[n](1-aq^n)(a-q^n)(b-q^n)$. This completes the proof.
\end{proof}

Letting $n=p^r$ be an odd prime power and letting $q\to 1$ in \eqref{eq:equiv-2}, we obtain
\begin{align}
\sum_{k=0}^{(p^r-1)/2}(4k+1)\frac{(\frac{1}{2})_k^4}{k!^4}
\equiv \sum_{k=0}^{p^r-1}(4k+1)\frac{(\frac{1}{2})_k^4}{k!^4} \pmod{p^{r+4}}.
\end{align}
Recall that the {\it Bernoulli numbers} $B_n$ are defined as follows:
$$
B_0=1,\quad \sum_{k=0}^{n}{n+1\choose k}B_k=0,\quad\text{for}\ n=1,2,\ldots.
$$
Based on numerical calculations, we would like to propose the following conjecture.
\begin{conj}Let $p>3$ be a prime and $r$ a positive integer. Then
\begin{align}
\sum_{k=0}^{(p^r-1)/2}(4k+1)\frac{(\frac{1}{2})_k^4}{k!^4}
\equiv p^r+\frac{7}{6}B_{p-3}p^{r+3} \pmod{p^{r+4}}.  \label{eq:Bernoulli}
\end{align}
\end{conj}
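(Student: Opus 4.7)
The plan is to first reduce, via the equivalence \eqref{eq:equiv} just established, to the full sum $T(p^r) := \sum_{k=0}^{p^r-1}(4k+1)(1/2)_k^4/k!^4$; it then suffices to prove $T(p^r) \equiv p^r + \tfrac{7}{6}B_{p-3}\,p^{r+3} \pmod{p^{r+4}}$. I would handle the prime case $r=1$ first and then lift to general $r$.

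For $r = 1$, I would adapt the $p$-adic Morita gamma function approach used by Long in her proof of the Van Hamme (C.2) supercongruence modulo $p^4$. Writing $(1/2)_k/k!$ in terms of $\Gamma_p$, splitting the sum at $k = (p-1)/2$, and pairing terms $k \leftrightarrow p-1-k$, each pair becomes a $p$-adic analytic function of $k$. Taylor-expanding to fourth order recovers Long's mod-$p^4$ result at orders $\le 3$ and exposes a $p^4$-correction built from harmonic-type sums, chiefly $H^{(2)}_{(p-1)/2} := \sum_{k=1}^{(p-1)/2} 1/k^2$ modulo $p^2$. Applying the Glaisher--Lehmer identity
$$\sum_{k=1}^{(p-1)/2}\frac{1}{k^2} \equiv -\frac{7}{3}\,p\, B_{p-3}\pmod{p^2}$$
converts this into a rational multiple of $B_{p-3}\,p^3$; a factor of $-1/2$ coming from the symmetric pairing combines with the $-7/3$ from the Lehmer identity to produce exactly the conjectured coefficient $7/6$.

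For the lift to $r \ge 2$, one natural route is to strengthen Theorem \ref{thm:main-1} to a parametric $q$-congruence modulo $[n]\Phi_n(q)^2(1-aq^n)(a-q^n)$, which is the next refinement within creative microscoping. Taking $n = p^r$, a double application of l'H\^opital's rule as $a \to 1$ followed by $q \to 1$ promotes the congruence from $\Phi_n(q)^4$ to $\Phi_n(q)^5$, giving modulus $p^{r+4}$. Alternatively, a Dwork-type lifting $T(p^r) \equiv p\, T(p^{r-1}) \pmod{p^{r+4}}$ (which should follow from grouping $k$ by residue modulo $p$ and using Morita-style identities for $(1/2)_{jp+\ell}$) would reduce the general case directly to the $r = 1$ result.

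The main obstacle will be the precise determination of the rational constant $7/6$. The $q$-machinery and the general structure theorems are insensitive to this $p$-adic rational: they locate the leading cyclotomic corrections but not the fine rational inside. Pinning it down requires a careful Taylor bookkeeping at the prime level together with correct invocation of the Glaisher--Lehmer identity. A secondary technical point is verifying that no $B_{p-5}$ contribution (from $H^{(4)}_{(p-1)/2}$) leaks into the $p^4$-residue, which requires confirming that the weight-$4$ harmonic sum enters only modulo $p$. This combinatorial--$p$-adic computation will be the technical heart of the argument.
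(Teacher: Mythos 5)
The first thing to say is that the paper does not prove \eqref{eq:Bernoulli}: it is stated explicitly as a conjecture ``based on numerical calculations,'' so there is no proof of the paper's to compare yours with. The only ingredient of your plan that the paper actually supplies is the reduction between the half sum and the full sum modulo $p^{r+4}$ (the congruence displayed after \eqref{eq:equiv-2}).

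As a research plan your outline is sensible, but it has a genuine gap, and the gap is precisely the content of the conjecture: the base case $r=1$, i.e.\ the evaluation of the truncated sum modulo $p^{5}$ with the explicit coefficient $\tfrac76 B_{p-3}$. Long's $\Gamma_p$-method, as it stands, terminates at $p^4$; at the next order the symmetric pairing no longer kills the relevant terms for free, and one must control not only $\sum_{k=1}^{(p-1)/2}k^{-2}$ modulo $p^2$ but also $\bigl(\sum_k k^{-1}\bigr)^2$, $\sum_k k^{-3}$, $\sum_k k^{-4}$ and the fourth logarithmic derivative of $\Gamma_p$ at $\tfrac12$, whose combined contribution is exactly what is unknown. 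Saying that ``careful Taylor bookkeeping'' will produce $\tfrac76$ is a restatement of the conjecture, not a proof; note also that the Lehmer congruence reads $\sum_{k=1}^{(p-1)/2}k^{-2}\equiv\tfrac73\,pB_{p-3}\pmod{p^2}$ with a \emph{positive} sign in the usual normalization, so your sign bookkeeping already needs repair. Separately, your first route for $r\geqslant2$ (strengthening Theorem \ref{thm:main-1} to a congruence modulo $[n]\Phi_n(q)^2(1-aq^n)(a-q^n)$) cannot work as described: the right-hand side of \eqref{eq:main-1} contains nothing that could specialize to a Bernoulli number, and the paper itself remarks that $q$-analogues of supercongruences involving Bernoulli numbers are very difficult to produce; any $\Phi_n(q)^5$ refinement would have to encode the $B_{p-3}$ term on the $q$-side first, which is itself open. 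Your second route is the viable one: a Dwork-type congruence $T(p^r)\equiv p\,T(p^{r-1})\pmod{p^{3r}}$ does follow from known $q$-congruences modulo $[p^r]^3$ by letting $q\to1$, and since $3r\geqslant r+4$ for $r\geqslant2$ it closes the induction --- but it yields only $p^3$ when $r=1$ and therefore cannot substitute for the missing base case.
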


Note that Sun \cite[Conjecture 5.1(ii)]{Sun} conjectured that, for any prime $p>3$ and integer $r\geqslant 1$,
\begin{align}
\sum_{k=0}^{p^r-1} (3k+1)\frac{(\frac{1}{2})_k^3}{k!^3}2^{2k}
\equiv p^r+\frac{7}{6}B_{p-3}p^{r+3} \pmod{p^{r+4}}. \label{eq:Sun}
\end{align}
Since the right-hand sides of \eqref{eq:Bernoulli} and \eqref{eq:Sun} are exactly the same,
it is natural to raise the following new conjecture (which is also valid for $p=3$).

\begin{conj}Let $p$ be an odd prime and $r$ a positive integer. Then
\begin{align}
\sum_{k=0}^{(p^r-1)/2}(4k+1)\frac{(\frac{1}{2})_k^4}{k!^4}
\equiv \sum_{k=0}^{p^r-1} (3k+1) \frac{(\frac{1}{2})_k^3}{k!^3}2^{2k}  \pmod{p^{r+4}}.  \label{eq:a=b}
\end{align}
\end{conj}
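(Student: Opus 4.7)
The plan is to derive \eqref{eq:a=b} from the following $q$-supercongruence, which I expect to hold for every positive odd integer $n$:
\begin{equation}\label{plan-qcong}
\sum_{k=0}^{(n-1)/2}[4k+1]\frac{(q;q^2)_k^4}{(q^2;q^2)_k^4}
\;\equiv\;
\sum_{k=0}^{n-1}[3k+1]\frac{(q;q^2)_k^3\, q^{-{k+1\choose 2}}}{(q;q)_k^2(q^2;q^2)_k}
\pmod{[n]\Phi_n(q)^4}.
\end{equation}
Setting $n=p^r$ and letting $q\to 1$ turns the modulus into $p^r\cdot p^4=p^{r+4}$, exactly what \eqref{eq:a=b} demands; so it suffices to establish \eqref{plan-qcong}.

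To attack \eqref{plan-qcong} I would push the two-parameter creative microscoping setup used in Sections 2 and 4. The auxiliary lemmas there supply parametric companions $L_n(q;a,b)$ and $R_n(q;a,b)$ of the two sums, and equations \eqref{eq:abqn} and \eqref{eq:3k+1-new} show that $L_n(q;a,b)$ and $R_n(q;a,b)$ collapse to the \emph{same} closed-form expression modulo $[n](1-aq^n)(a-q^n)(b-q^n)$. The key step I would try to establish is the sharpening
\begin{equation}\label{plan-sharpen}
L_n(q;a,b)\equiv R_n(q;a,b) \pmod{[n]\Phi_n(q)(1-aq^n)(a-q^n)(b-q^n)}.
\end{equation}
Because $(1-q^n)^3=\Phi_n(q)^3\prod_{d\mid n,\,d<n}\Phi_d(q)^3$, letting $a\to 1$ and then $b\to 1$ in \eqref{plan-sharpen}, exactly as in the proof of Theorem~\ref{thm:main-1}, would upgrade the modulus to $[n]\Phi_n(q)^4$ and yield \eqref{plan-qcong}.

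The main obstacle is securing that extra factor of $\Phi_n(q)$. The Jackson and Rahman evaluations driving \eqref{eq:abqn} and \eqref{eq:3k+1-new} only read off the residues along the three hypersurfaces $1-aq^n=0$, $a-q^n=0$, and $b-q^n=0$; gaining one more power of $\Phi_n(q)$ requires first-order information along $\Phi_n(q)=0$, which is genuinely new. Two concrete avenues look plausible: (i) adapt the upper-half/lower-half pairing argument that yields \eqref{eq:equiv-2}, so as to show that the termwise difference of the two parametric summands, evaluated at $q=\zeta$ a primitive $n$-th root of unity, cancels in pairs not merely modulo $\Phi_n(q)$ but modulo $\Phi_n(q)^2$; or (ii) replace one of the underlying ${}_6\phi_5$- or Rahman-type evaluations by a terminating Watson or ${}_{10}\phi_9$ transformation as in \cite{GS}, which typically carries an extra $(q^n;q)$-type factor and so supplies the missing cyclotomic contribution automatically.

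As a secondary, purely numerical route, note that \eqref{eq:a=b} would follow at once from the preceding conjecture together with Sun's conjecture \eqref{eq:Sun}, since both sides of \eqref{eq:a=b} would then be independently pinned down to $p^r+\tfrac{7}{6}B_{p-3}p^{r+3}\pmod{p^{r+4}}$. This alternative however would require coming to grips with the Bernoulli-number correction $B_{p-3}$, which seems to lie outside the reach of creative microscoping as currently developed and would presumably demand a different toolkit, such as Morita's $p$-adic Gamma function or the WZ method augmented by higher-order certificates.
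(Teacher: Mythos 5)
The statement you are asked to prove is, in the paper, a \emph{conjecture}: the author offers no proof of \eqref{eq:a=b}, only the observation that it would follow from the $q$-congruence \eqref{eq:a=b-q} by setting $n=p^r$ and letting $q\to 1$, and that \eqref{eq:a=b-q} would in turn follow from the parametric statement \eqref{eq-conj:final}, which is known modulo $[n](1-aq^n)(a-q^n)(b-q^n)$ by comparing \eqref{eq:abqn} with \eqref{eq:3k+1-new} but remains open modulo the extra factor of $\Phi_n(q)$ (equivalently, modulo $\Phi_n(q)^2$). Your proposal reconstructs exactly this reduction: your \eqref{plan-qcong} is \eqref{eq:a=b-q}, and your \eqref{plan-sharpen} is precisely the paper's Conjecture~\ref{conj:final}. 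Up to that point your plan is faithful to the paper's intended route, and your accounting of how the limits $a\to1$, $b\to1$ upgrade the modulus to $[n]\Phi_n(q)^4$ matches the mechanism used in the proofs of Theorems~\ref{thm:main-1}--\ref{thm:main-3}.

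The genuine gap is that the central step \eqref{plan-sharpen} is never established. You correctly identify that the Jackson and Rahman evaluations only control the sums along the hypersurfaces $1-aq^n=0$, $a-q^n=0$, $b-q^n=0$ and give nothing to first order along $\Phi_n(q)=0$, but your two proposed remedies are left entirely speculative: avenue (i) would require showing that the pairing of the $k$-th and $((n+1)/2-k)$-th terms cancels modulo $\Phi_n(q)^2$ rather than $\Phi_n(q)$, which is exactly the difficulty the paper's proof of \eqref{eq:equiv} circumvents only for the \emph{single} sum \eqref{eq:equiv-2} (where both sides are the same summand over different ranges), not for two structurally different summands as in \eqref{eq-conj:final}; avenue (ii) is a hope, not an argument. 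Your secondary numerical route is also not a proof, since it rests on two further unproven conjectures, namely \eqref{eq:Bernoulli} and Sun's \eqref{eq:Sun}. In short, you have produced a correct reduction of the conjecture to another open conjecture --- the same reduction the paper records --- but no proof of the statement.
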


Although it is very difficult to give $q$-analogues of supercongruences involving Bernoulli numbers,
we find a $q$-analogue of \eqref{eq:a=b}, which is included in the following conjecture.
\begin{conj}\label{conj:final}
Let $n$ be a positive odd integer. Then, modulo $[n]\Phi_n(q)(1-aq^n)(a-q^n)(b-q^n)$,
\begin{align}
&\sum_{k=0}^{(n-1)/2}[4k+1]\frac{(aq;q^2)_k (q/a;q^2)_k (q/b;q^2)_k (q;q^2)_k}
{(aq^2;q^2)_k(q^2/a;q^2)_k (bq^2;q^2)_k (q^2;q^2)_k}b^k \notag\\[5pt]
&\quad\equiv \sum_{k=0}^{n-1}[3k+1]\frac{(aq;q^2)_k(q/a;q^2)_k(q;q^2)_k(q/b;q)_k b^kq^{-{k+1\choose 2}}}
{(aq;q)_k(q/a;q)_k(q;q)_k(bq^2;q^2)_k}.    \label{eq-conj:final}
\end{align}
In particular, we have
\begin{align}
\sum_{k=0}^{(n-1)/2}[4k+1]\frac{(q;q^2)_k^4}{(q^2;q^2)_k^4}
\equiv \sum_{k=0}^{n-1}[3k+1]\frac{(q;q^2)_k^3 q^{-{k+1\choose 2} } }{(q;q)_k^2 (q^2;q^2)_k} \pmod{[n]\Phi_n(q)^4}. \label{eq:a=b-q}
\end{align}
\end{conj}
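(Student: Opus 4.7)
The plan is to reuse the Chinese remainder theorem (CRT) framework that drives the proofs of Theorems~\ref{thm:main-1}--\ref{thm:main-3} and then bootstrap from the already-established equivalence~\eqref{eq:equiv}. Denote the left- and right-hand sides of~\eqref{eq-conj:final} by $S_{4}$ and $S_{3}$ respectively. First I would verify that $S_{4}$ and $S_{3}$ share a common residue modulo each of the two coprime factors $[n](1-aq^{n})(a-q^{n})$ and $b-q^{n}$. Modulo $[n](1-aq^{n})(a-q^{n})$ this is Lemma~\ref{th:4.2} (with $d=2$) for $S_{4}$, together with~\eqref{eq:conj-3k+1} for $S_{3}$; modulo $b-q^{n}$ it is Lemma~\ref{lem:2} for $S_{4}$, together with~\eqref{eq:conj-3k+1-2} for $S_{3}$. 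In both cases the two residues are literally identical, which is the crucial coincidence underlying the conjecture. Applying CRT exactly as in the proofs of Theorems~\ref{thm:main-1} and~\ref{thm:main-3} therefore delivers
\[
S_{4}\equiv S_{3}\pmod{[n](1-aq^{n})(a-q^{n})(b-q^{n})}
\]
with no further work.

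The target modulus is $[n]\Phi_{n}(q)(1-aq^{n})(a-q^{n})(b-q^{n})$, so one extra factor of $\Phi_{n}(q)$ must still be recovered. On the $[4k+1]$-side the already proved equivalence~\eqref{eq:equiv} upgrades the half-sum $S_{4}$ to the same sum taken from $0$ to $n-1$ (call it $S_{4}^{\mathrm{full}}$) modulo $[n]\Phi_{n}(q)(1-aq^{n})(a-q^{n})(b-q^{n})$. Composing this with the CRT congruence above reduces the conjecture to the single statement
\[
S_{4}^{\mathrm{full}}\equiv S_{3}\pmod{\Phi_{n}(q)^{2}(1-aq^{n})(a-q^{n})(b-q^{n})},
\]
since the $[n]/\Phi_{n}(q)$ part of the target modulus is already supplied by CRT.

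The main obstacle is precisely this residual congruence modulo $\Phi_{n}(q)^{2}$, since it compares two $q$-hypergeometric series of structurally different shape: the $[4k+1]$-series is of Jackson's balanced $_{6}\phi_{5}$ type in base $q^{2}$, whereas the $[3k+1]$-series carries the quadratic twist $q^{-{k+1\choose 2}}$ and the non-symmetric denominator $(aq;q)_{k}(q/a;q)_{k}$. My strategy is to imitate the pairing argument from the proof of~\eqref{eq:equiv}: reverse the summation index $k\mapsto n-1-k$ by means of the reflection congruence
\[
\frac{(a;q)_{n-k}}{(b;q)_{n-k}}\equiv\frac{(a;q)_{n}(q/b;q)_{k}}{(b;q)_{n}(q/a;q)_{k}}(b/a)^{k}\pmod{\Phi_{n}(q)}
\]
already used there, and then to match the modified $S_{4}^{\mathrm{full}}$ against $S_{3}$ term-by-term modulo $\Phi_{n}(q)^{2}$, exploiting the fact that the vanishing prefactors $(q;q^{2})_{n}$ and $(q;q)_{n}$ each contribute one power of $\Phi_{n}(q)$. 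A cleaner alternative would be to insert one more free parameter $c$ into both sides and search in~\cite{GR} for a basic hypergeometric transformation that interpolates between the two series, then specialize. In either route the substantive difficulty is the same, namely locating the mechanism that extracts the extra factor of $\Phi_{n}(q)$ from the discrepancy of two structurally different $q$-series, and this is precisely what currently blocks a direct proof.
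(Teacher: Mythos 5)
The statement you are trying to prove is left as an open conjecture in the paper (Conjecture~\ref{conj:final}); the author supplies no proof of it, only the observation that \eqref{eq-conj:final} holds modulo $[n](1-aq^n)(a-q^n)(b-q^n)$ because the right-hand sides of \eqref{eq:abqn} and \eqref{eq:3k+1-new} coincide, and that it therefore suffices to establish \eqref{eq-conj:final} modulo $\Phi_n(q)^2$. Your CRT step reproduces exactly this partial result: the residues from Lemma~\ref{th:4.2} and \eqref{eq:conj-3k+1}, and from Lemma~\ref{lem:2} and \eqref{eq:conj-3k+1-2}, are indeed literally identical, so $S_4\equiv S_3$ modulo $[n](1-aq^n)(a-q^n)(b-q^n)$ is correct. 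Up to that point you are on solid ground and in agreement with the paper.

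The genuine gap is everything after that. The entire new content of the conjecture is the extra factor of $\Phi_n(q)$, i.e.\ the congruence modulo $\Phi_n(q)^2$, and your proposal does not prove it --- you say yourself that this is ``precisely what currently blocks a direct proof.'' The two strategies you sketch are not workable as stated: the pairing argument behind \eqref{eq:equiv} succeeds because it compares a sum with \emph{itself} under the reflection $k\mapsto n-k$, producing an antisymmetric summand times a prefactor divisible by $\Phi_n(q)$; here you must compare two sums of genuinely different shape (a balanced base-$q^2$ series against a quadratically twisted series with denominator $(aq;q)_k(q/a;q)_k$), and there is no natural bijection of terms for the cancellation to act on. The appeal to \eqref{eq:equiv} also does not buy anything: replacing $S_4$ by $S_4^{\mathrm{full}}$ merely changes which unproven comparison you face, and the reduced statement you write down, modulo $\Phi_n(q)^2(1-aq^n)(a-q^n)(b-q^n)$, is in fact stronger than what is needed (modulo $\Phi_n(q)^2$ alone suffices, since the remaining factors are already covered by the CRT congruence). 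In short, the proposal is an honest reduction to the open problem, not a proof of the statement.
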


It is easy to see that \eqref{eq:a=b} follows from \eqref{eq:a=b-q} by taking $n=p^r$ and $q\to 1$.
Note that the $q$-congruence \eqref{eq-conj:final} is true modulo $[n](1-aq^n)(a-q^n)(b-q^n)$  by \eqref{eq:abqn} and \eqref{eq:3k+1-new}.
Thus, to prove Conjecture \ref{conj:final} it suffices to show that \eqref{eq-conj:final} is true modulo $\Phi_n(q)^2$.

\vskip 5mm \noindent{\bf Acknowledgment.}
This work was partially supported by the National Natural Science Foundation of China (grant 11771175).

\end{document}